\title{Generic transversality of radially symmetric stationary solutions stable at infinity for parabolic gradient systems}
\author{Emmanuel \textsc{Risler}}
\begin{document}
\maketitle
\begin{abstract}
This paper is devoted to the generic transversality of radially symmetric stationary solutions of nonlinear parabolic systems of the form
\[
\partial_t w(x,t) = -\nabla V\bigl(w((x,t))\bigr) + \Delta_x w(x,t)
\,, 
\]
where the space variable $x$ is multidimensional and unbounded. It is proved that, generically with respect to the potential $V$, radially symmetric stationary solutions that are \emph{stable at infinity} (in other words, that approach a minimum point of $V$ at infinity in space) are transverse; as a consequence, the set of such solutions is discrete. This result can be viewed as the extension to higher space dimensions of the generic elementarity of symmetric standing pulses, proved in a companion paper. It justifies the generic character of the discreteness hypothesis concerning this set of stationary solutions, made in another companion paper devoted to the global behaviour of (time dependent) radially symmetric solutions stable at infinity for such systems. 
\end{abstract}
\nnfootnote{%
\emph{2020 Mathematics Subject Classification:} 35K57, 37C20, 37C29.\\%
\emph{Key words and phrases:} parabolic gradient systems, radially symmetric stationary solutions, generic transversality, Morse--Smale theorem.
}
\pagebreak
\tableofcontents
\pagebreak
\section{Introduction}
\subsection{An insight into the main result}
The purpose of this paper is to prove the generic transversality of radially symmetric stationary solutions stable at infinity for gradient systems of the form
\begin{equation}
\label{parabolic_system_higher_space_dimension}
\partial_t w(x,t) = -\nabla V\bigl(w((x,t))\bigr) + \Delta_x w(x,t)
\,,
\end{equation}
where time variable $t$ is real, space variable $x$ lies in the spatial domain $\rr^{\dSpace}$ with $\dSpace$ an integer not smaller than $2$, the state function $(x,t)\mapsto w(x,t)$ takes its values in $\rr^{\dState}$ with $\dState$ a positive integer, and the nonlinearity is the gradient of a scalar potential function $V:\rr^{\dState}\to\rr$, which is assumed to be regular (of class at least $\ccc^2$). An insight into the main result of this paper (\vref{thm:main}) is provided by the following corollary. 
\begin{corollary}
\label{cor:insight_main_result}
For a generic potential $V$, the following conclusions hold:
\begin{enumerate}
\item every radially symmetric stationary solution stable at infinity of system \cref{parabolic_system_higher_space_dimension} is robust with respect to small perturbations of $V$;
\label{item:cor_insight_robustness}
\item the set of all such solutions is discrete. 
\label{item:cor_insight_discreteness}
\end{enumerate}
\end{corollary}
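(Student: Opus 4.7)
The plan is to deduce both assertions from \cref{thm:main} via a single application of the implicit function theorem. Write a radially symmetric stationary solution of \cref{parabolic_system_higher_space_dimension} as a profile $q:[0,+\infty)\to\rr^{\dState}$ satisfying
\[
q''(r) + \frac{\dSpace-1}{r}q'(r) = \nabla V\bigl(q(r)\bigr)\,, \quad q'(0)=0\,,
\]
together with the stability-at-infinity condition that $q(r)$ converge, as $r\to+\infty$, to a nondegenerate minimum point $m$ of $V$. Encode this boundary value problem as a smooth nonlinear map $F(q,V)$ between Banach spaces, the domain of $q$ being an exponentially weighted Hölder or Sobolev space of profiles decaying to $m$ at infinity. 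With an appropriate weight, the partial linearisation $D_qF(q,V)$ is Fredholm of index zero, since the asymptotic operator $-\partial_r^2+D^2V(m)$ becomes invertible on the weighted space.

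By \cref{thm:main}, for a generic potential $V$, at every radially symmetric stationary solution $q$ of \cref{parabolic_system_higher_space_dimension} stable at infinity, the operator $D_qF(q,V)$ has trivial kernel; being Fredholm of index zero, it is then a Banach space isomorphism. For such a $V$ and such a profile $q_0$, the implicit function theorem applied to $F$ in the direction of $V$ furnishes a smooth local branch $V\mapsto q(V)$ of stationary solutions of the same type; this is precisely the robustness assertion \cref{item:cor_insight_robustness}.

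For \cref{item:cor_insight_discreteness}, applying the same implicit function theorem with $V$ held fixed yields local uniqueness of $q_0$ among solutions of the boundary value problem in the chosen Banach topology. Hence $q_0$ is an isolated point of the set of radially symmetric stationary solutions stable at infinity, and since this holds at every such profile the whole set is discrete.

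At the level of this corollary, the only genuine task is to choose the function space so that the linearised radial operator is Fredholm of index zero, which amounts to tuning an exponential weight to the spectrum of $D^2V(m)$; the conclusion then reduces to the implicit function theorem. All substantial work is absorbed in the statement of \cref{thm:main}, whose proof—establishing that transversality is generic in the potential—is where the real difficulty lies.
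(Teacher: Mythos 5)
Your functional--analytic route (a Fredholm map $F(q,V)$ on a weighted profile space plus the implicit function theorem) is genuinely different from the paper's, which stays inside the geometric picture of \cref{lem:equivalence_rad_sym_stat_stable_at_infty_intersection_Wu_Ws}: a solution is a component of the intersection \cref{intersection_unstable_stable_manifolds} of two $(\dState+1)$-dimensional invariant manifolds in the $(2\dState+1)$-dimensional space $\rr^{2\dState}\times(0,+\infty)$, and a transverse intersection of such manifolds is one-dimensional, hence locally reduced to the trajectory itself and persistent under perturbations of $V$ (the paper delegates these standard consequences to the companion paper). Your strategy can be made to work, but the pivotal sentence ``by \cref{thm:main} the operator $D_qF(q,V)$ has trivial kernel'' is not a restatement of the theorem, and the missing dictionary is the entire content of the deduction. \Cref{thm:main} gives transversality in the sense of \cref{def:transversality_rad_symm_solution_stable_at_infinity}, i.e.\ $T_p\WuOriginPlus_V+T_p\,\iota^{-1}\bigl(\WcsInftyPlus_V(u_\infty)\bigr)=\rr^{2\dState+1}$ along the trajectory. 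Since both tangent spaces have dimension $\dState+1$ and both contain the flow direction, whose $r$-component is nonzero, this is equivalent to saying that the only vector of $T_p\WuOriginPlus_V\cap T_p\,\iota^{-1}\bigl(\WcsInftyPlus_V(u_\infty)\bigr)$ with vanishing $r$-component is zero; and such vectors are exactly the solutions of the linearized radial equation that are tangent to $\WuOriginPlus_V$ at the singular point $r=0$ (to be matched with your boundary condition there, where admissible solutions are the bounded ones rather than those behaving like $r^{2-\dSpace}$) and tangent to the centre-stable manifold at infinity (to be matched with membership in your weighted space). You must also note that, the equation being non-autonomous, $\dot q$ is \emph{not} a kernel element, so no quotient by the flow direction occurs on the analytic side and the required index is $0$, not $1$. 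None of this is difficult, but without it your proof never actually uses \cref{thm:main}.

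The second gap concerns conclusion \cref{item:cor_insight_discreteness}. What you obtain is isolation of $q_0$ in the Banach topology of profiles, whereas the meaning the paper attaches to discreteness (\cref{cor:insight_main_result_bis}) is that $\ssss_V^0=\{u(0):u\in\ssss_V\}$ is discrete in $\rr^{\dState}$. The implication does not come for free in the direction you need: a sequence $q_n$ in $\ssss_V$ with $q_n(0)\to q_0(0)$ and $q_n\neq q_0$ converges to $q_0$ only locally uniformly on $[0,+\infty)$, and to upgrade this to convergence in an exponentially weighted norm you must show that, for $n$ large, $\bigl(q_n(N),\dot q_n(N),1/N\bigr)$ lies on the local centre-stable graph of \cref{prop:loc_centre_stab_manifold_at_infinity} attached to the \emph{same} minimum point $u_\infty$. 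This continuity-at-infinity step is exactly what the paper's finite-radius reformulation (the map $\Phi$ of \cref{def_Phi} and \cref{prop:equivalence_transversality}) builds in by construction; in your formulation it has to be argued explicitly before local uniqueness in the function space yields discreteness of $\ssss_V^0$.
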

The discreteness stated in conclusion \cref{item:cor_insight_discreteness} of this corollary is a required assumption for the main result of \cite{Risler_globalBehaviourRadiallySymmetric_2017}, which describes the global behaviour of radially symmetric (time dependent) solutions stable at infinity for the parabolic system \cref{parabolic_system_higher_space_dimension}. \Cref{cor:insight_main_result} provides a rigorous proof that this assumption holds generically with respect to $V$. 

This paper can be viewed as a supplement of the article \cite{JolyRisler_genericTransversalityTravStandFrontsPulses_2023}, which is devoted to the generic transversality of bistable travelling fronts and standing pulses stable at infinity for parabolic systems of the form \cref{parabolic_system_higher_space_dimension} in (unbounded) space dimension one, and which provides a rigorous proof of the genericity of similar assumptions made in \cite{Risler_globalRelaxation_2016,Risler_globalBehaviour_2016,Risler_globalBehaviourHyperbolicGradient_2017}. The ideas, the nature of the results, and the scheme of the proof are the same.  
\subsection{Radially symmetric stationary solutions stable at infinity}
A function $u:[0,+\infty)\to\rr^{\dState}$, $r\mapsto u(r)$ defines a radially symmetric stationary solution of the parabolic system \cref{parabolic_system_higher_space_dimension} if and only if it satisfies, on $(0,+\infty)$, the (non-autonomous) differential system
\begin{equation}
\label{differential_system_radially_symmetric_stationary_order_2}
\ddot u (r) = -\frac{\dSpace-1}{r} \dot u(r) + \nabla V\bigl(u(r)\bigr)
\,,
\end{equation} 
where $\dot u$ and $\ddot u$ stand for the first and second derivatives of $r\mapsto u(r)$, together with the limit
\begin{equation}
\label{boundary_condition_at_r_equals_0_order_2}
\dot u(r) \to 0 \quad\text{as}\quad r\to0^+ 
\,.
\end{equation}
Observe that, in this case, $u(\cdot)$ is actually the restriction to $[0,+\infty)$ of an even function in $\ccc^3(\rr,\rr^\dState)$ which is a solution (on $\rr$) of the differential system \cref{differential_system_radially_symmetric_stationary_order_2} (the limit \cref{boundary_condition_at_r_equals_0_order_2} ensures that equality \cref{differential_system_radially_symmetric_stationary_order_2} still makes sense and holds at $r$ equals $0$). In other words, provided that condition \cref{boundary_condition_at_r_equals_0_order_2} holds, it is equivalent to assume that system \cref{differential_system_radially_symmetric_stationary_order_2} holds on $(0,+\infty)$ or on $[0,+\infty)$. By abuse of language, the terminology \emph{radially symmetric stationary solution of system \cref{parabolic_system_higher_space_dimension}} will refer, all along the paper, to functions $u:[0,+\infty)\to\rr^{\dState}$ satisfying these conditions \cref{differential_system_radially_symmetric_stationary_order_2,boundary_condition_at_r_equals_0_order_2} (even if, formally, it is rather the function $\rr^\dSpace\to\rr^\dState$, $x\mapsto u\bigl(\abs{x}\bigr)$ that fits with this terminology). 

Let us denote by $\Sigma_{\min}(V)$ the set of \emph{nondegenerate} (local or global) minimum points of $V$; with symbols, 
\[
\Sigma_{\min}(V) = \bigl\{u\in\rr^{\dState}: \nabla V(u) = 0 \text{ and } D^2V(u)>0\bigr\}
\,.
\]
Throughout all the paper, the words \emph{minimum point} will be used to denote a local \emph{or} global minimum point of a (potential) function. 
\begin{definition}
\label{def:stable_close_to_m_at_infinit}
A (global) solution $(0,+\infty)\to\rr^{\dState}$, $r\mapsto u(r)$, of the differential system \cref{differential_system_radially_symmetric_stationary_order_2} (in particular a radially symmetric stationary solution of system \cref{parabolic_system_higher_space_dimension}) is said to be \emph{stable at infinity} if $u(r)$ approaches a point of $\Sigma_{\min}(V)$ as $r$ goes to $+\infty$. If this point of $\Sigma_{\min}(V)$ is denoted by $u_\infty$, then the solution is said to be \emph{stable close to $u_\infty$ at infinity}. 
\end{definition}
\begin{notation}
For every $u_\infty$ in $\Sigma_{\min}(V)$, let $\ssss_{\VuInfty}$ denote the set of the radially symmetric stationary solutions of system \cref{parabolic_system_higher_space_dimension} that are stable close to $u_\infty$ at infinity. With symbols,
\[
\ssss_{\VuInfty} = \bigl\{
u:[0,+\infty)\to\rr^{\dState}:  u \text{ satisfies \cref{differential_system_radially_symmetric_stationary_order_2,boundary_condition_at_r_equals_0_order_2}} \text{ and $u(r)\xrightarrow[r\to+\infty]{}u_\infty$}
\bigr\}
\,.
\]
Let
\[
\ssss_{\VuInfty}^0 = \Bigl\{u(0): u\in\ssss_{\VuInfty} \Bigr\} 
\,,
\]
and let
\begin{equation}
\label{notation_sss_V}
\ssss_V = \bigsqcup_{u_\infty\in\Sigma_{\min}(V)} \ssss_{\VuInfty} 
\quad\text{and}\quad
\ssss_V^0 = \bigsqcup_{u_\infty\in\Sigma_{\min}(V)} \ssss_{\VuInfty}^0
\,. 
\end{equation}
\end{notation}
The following statement is an equivalent (simpler) formulation of conclusion \cref{item:cor_insight_discreteness} of \cref{cor:insight_main_result}.
\begin{corollary}
\label{cor:insight_main_result_bis}
For a generic potential $V$, the subset $\ssss_V^0$ of $\rr^{\dState}$ is discrete. 
\end{corollary}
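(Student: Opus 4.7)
The plan is to deduce Corollary \ref{cor:insight_main_result_bis} directly from the main transversality result (\cref{thm:main}), essentially by a dimension count. First I would recast \cref{differential_system_radially_symmetric_stationary_order_2} as a first-order non-autonomous system on the phase space $\rr^{2\dState}$ with coordinates $(u,\dot u)$. Fixing $u_\infty$ in $\Sigma_{\min}(V)$, the elements of $\ssss_{\VuInfty}$ are then in bijection with the intersection points of two natural submanifolds of $\rr^{2\dState}$: the $\dState$-dimensional subspace $M_0=\rr^{\dState}\times\{0\}$, encoding the boundary condition \cref{boundary_condition_at_r_equals_0_order_2} at $r=0$, and the set $W^s(u_\infty)$ of initial data (say at some fixed $r=r_0>0$, after flowing forward from $0$) whose trajectory converges to $u_\infty$ as $r\to+\infty$. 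Since $u_\infty$ is a nondegenerate minimum of $V$, the limiting autonomous system $\ddot u=\nabla V(u)$ has $u_\infty$ as a hyperbolic equilibrium with a $\dState$-dimensional stable manifold; the non-autonomous term $-\tfrac{\dSpace-1}{r}\dot u$ is integrable at infinity, so a standard asymptotic-phase argument makes $W^s(u_\infty)$ an immersed $\dState$-dimensional submanifold of $\rr^{2\dState}$.

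Once this geometric picture is in place, the meaning of \emph{transversality} provided by \cref{thm:main} is that, at each point of $W^s(u_\infty)$ arising from an element of $\ssss_{\VuInfty}$, the tangent spaces to (the flowed-forward image of) $M_0$ and to $W^s(u_\infty)$ span $\rr^{2\dState}$. Because the two submanifolds have complementary dimensions ($\dState+\dState=2\dState$), transversality forces their intersection to be a zero-dimensional submanifold. Equivalently, each $u$ in $\ssss_{\VuInfty}$ is isolated and persists under small perturbations of $V$, which simultaneously gives conclusion \cref{item:cor_insight_robustness} of \cref{cor:insight_main_result} and the discreteness of $\ssss_{\VuInfty}^0$ for each individual $u_\infty$.

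To pass from ``$\ssss_{\VuInfty}^0$ is discrete for each $u_\infty$'' to ``$\ssss_V^0$ is discrete'', I would argue as follows. Generically, $V$ is a Morse function, hence $\Sigma_{\min}(V)$ is a discrete subset of $\rr^{\dState}$. Suppose a sequence $(u_n(0))_n$ in $\ssss_V^0$ accumulates at some point $u_\star$. By continuous dependence on initial data, the corresponding solutions $u_n$ converge locally uniformly to a solution $u$ of \cref{differential_system_radially_symmetric_stationary_order_2,boundary_condition_at_r_equals_0_order_2}. The gradient/variational structure of \cref{differential_system_radially_symmetric_stationary_order_2} (through the energy $\tfrac12|\dot u|^2-V(u)$ with its $-\tfrac{\dSpace-1}{r}|\dot u|^2$ dissipation) together with the Morse property of $V$ confines $u$ to approach a point of $\Sigma_{\min}(V)$, so $u\in\ssss_{V,u_\infty'}$ for some $u_\infty'$. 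Then $u_\star\in\ssss_{V,u_\infty'}^0$, contradicting the discreteness of this set obtained in the previous step.

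The genuinely delicate point is this last accumulation argument, ruling out accumulation of stationary orbits coming from different asymptotic minima $u_\infty$; the pure dimension count is essentially formal once the transversality of \cref{thm:main} is granted. I expect this, together with the precise treatment of $W^s(u_\infty)$ as an \emph{immersed} (possibly non-embedded) submanifold and the appropriate function-space setup behind ``generic $V$'', to be where the care in the full proof is concentrated.
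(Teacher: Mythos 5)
Your first two paragraphs follow, in substance, exactly the route the paper takes: \cref{prop:equivalence_transversality} reformulates the transversality of \cref{def:transversality_rad_symm_solution_stable_at_infinity} as the transversality, in the slice $\rr^{2\dState}\times\{N\}$, of the two $\dState$-dimensional manifolds $\WuOriginPlus_V\cap\bigl(\rr^{2\dState}\times\{N\}\bigr)$ and $\iota^{-1}\bigl(\WcsInftyPlus_V(u_\infty)\bigr)\cap\bigl(\rr^{2\dState}\times\{N\}\bigr)$, and the count $\dim(\mmm)-\codim(\www)=0$ is your complementary-dimension observation; combined with the continuity and injectivity of $u_0\mapsto S_V\bigl((0,N),(u_0,0_{\rr^{\dState}})\bigr)$, this yields the discreteness of each set $\ssss_{\VuInfty}^0$ separately. (The paper only sketches this deduction, deferring details to the companion paper, but this is clearly the intended mechanism.)

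The genuine gap is in your last step, the passage from the discreteness of each $\ssss_{\VuInfty}^0$ to that of $\ssss_V^0=\bigsqcup_{u_\infty}\ssss_{\VuInfty}^0$. Suppose $u_\star=u(0)$ with $u\in\ssss_{\VuInfty}$ and $u_n(0)\to u_\star$ with $u_n\in\ssss_V$ asymptotic to minimum points $u_{\infty,n}$. Since $\WuOriginPlus_V(u_\star)$ is one-dimensional, there is a \emph{unique} solution of \cref{differential_system_radially_symmetric_stationary_order_2,boundary_condition_at_r_equals_0_order_2} emanating from $u_\star$, so the locally uniform limit of the $u_n$ is $u$ itself and identifying the asymptotic minimum of that limit yields $u_\infty'=u_\infty$ and no new information. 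The contradiction you invoke (``$u_\star\in\ssss_{V,u_\infty'}^0$ contradicts the discreteness of $\ssss_{V,u_\infty'}^0$'') would require the points $u_n(0)$ themselves to lie in $\ssss_{V,u_\infty'}^0$ for infinitely many $n$, i.e.\ $u_{\infty,n}=u_\infty$ eventually; that is precisely the interchange of the limits $n\to\infty$ and $r\to+\infty$ that locally uniform convergence does not justify. Concretely, nothing in your argument excludes a sequence of solutions whose trajectories shadow $u$ up to larger and larger radii, enter the box of \cref{prop:loc_centre_stab_manifold_at_infinity} around $u_\infty$ ever closer to the local centre-stable manifold, and then escape along the unstable directions of the saddle-type equilibrium $(u_\infty,0_{\rr^{\dState}},0)$ of system \cref{differential_system_autonomous_time_r} towards a \emph{different} minimum point. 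Ruling this out requires an additional compactness or uniform-capture argument (for instance, extracting from such a sequence a heteroclinic orbit of the limit system \cref{differential_system_order_1_limit_r_goes_to_infty} joining two minimum points at the same critical value, which is itself a non-generic configuration, or showing that the radius $N$ at which nearby elements of $\ssss_V$ are captured by one of the local centre-stable manifolds can be chosen uniformly); it does not follow from the per-minimum discreteness plus the dimension count alone. This is the part of the deferred argument where the real work lies, and your sketch does not supply it.
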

\subsection{Differential systems governing radially symmetric stationary solutions}
\label{subsec:differential_systems_gov_rad_symm_stat_sol}
The second-order differential system \cref{differential_system_radially_symmetric_stationary_order_2} is equivalent to the (non-autonomous) $2\dState$-dimensional first order differential differential system
\begin{equation}
\label{differential_system_radially_symmetric_stationary_order_1}
\left\{
\begin{aligned}
\dot u &= v \\
\dot v &= -\frac{\dSpace-1}{r}v + \nabla V(u)
\,.
\end{aligned}
\right.
\end{equation}
Introducing the auxiliary variables $\tau$ and $c$ defined as
\begin{equation}
\label{tau_and_c}
\tau=\log(r)
\quad\text{and}\quad
c = \frac{1}{r}
\,,
\end{equation}
the previous $2\dState$-dimensional differential system \cref{differential_system_radially_symmetric_stationary_order_1} is equivalent to each of the following two $2\dState+1$-dimensional autonomous differential systems:
\begin{equation}
\label{differential_system_autonomous_time_tau}
\left\{
\begin{aligned}
u_\tau &= rv \\
v_\tau &= -(\dSpace-1)v + r \nabla V(u) \\
r_\tau &= r
\,,
\end{aligned}
\right.
\end{equation}
and
\begin{equation}
\label{differential_system_autonomous_time_r}
\left\{
\begin{aligned}
u_r &= v \\
v_r &= -(\dSpace-1)c v + \nabla V(u) \\
c_r &= -c^2 
\,.
\end{aligned}
\right.
\end{equation}
\begin{remark}
Integrating the third equations of systems \cref{differential_system_autonomous_time_tau,differential_system_autonomous_time_r} yields
\[
r = r_0e^{\tau-\tau_0}
\quad\text{and}\quad
\frac{1}{c}-\frac{1}{c_0} = r-r_0
\,,
\]
and the parameters $\tau_0$ and $c_0$ (which determine in each case the origin of ``time'') do not matter in principle, since those systems are autonomous. However, if the ``initial conditions'' $r_0$ and $c_0$ are positive (which is true for the solutions that describe radially symmetric stationary solutions of system \cref{parabolic_system_higher_space_dimension}), it is natural to choose, in each case, the origins of time according to equalities \cref{tau_and_c}, that is~: 
\[
\tau_0 = \ln(r_0)\quad\text{and}\quad c_0 = \frac{1}{r_0}
\,.
\]
\end{remark}
\begin{figure}[htbp]
\centering
\includegraphics[width=\textwidth]{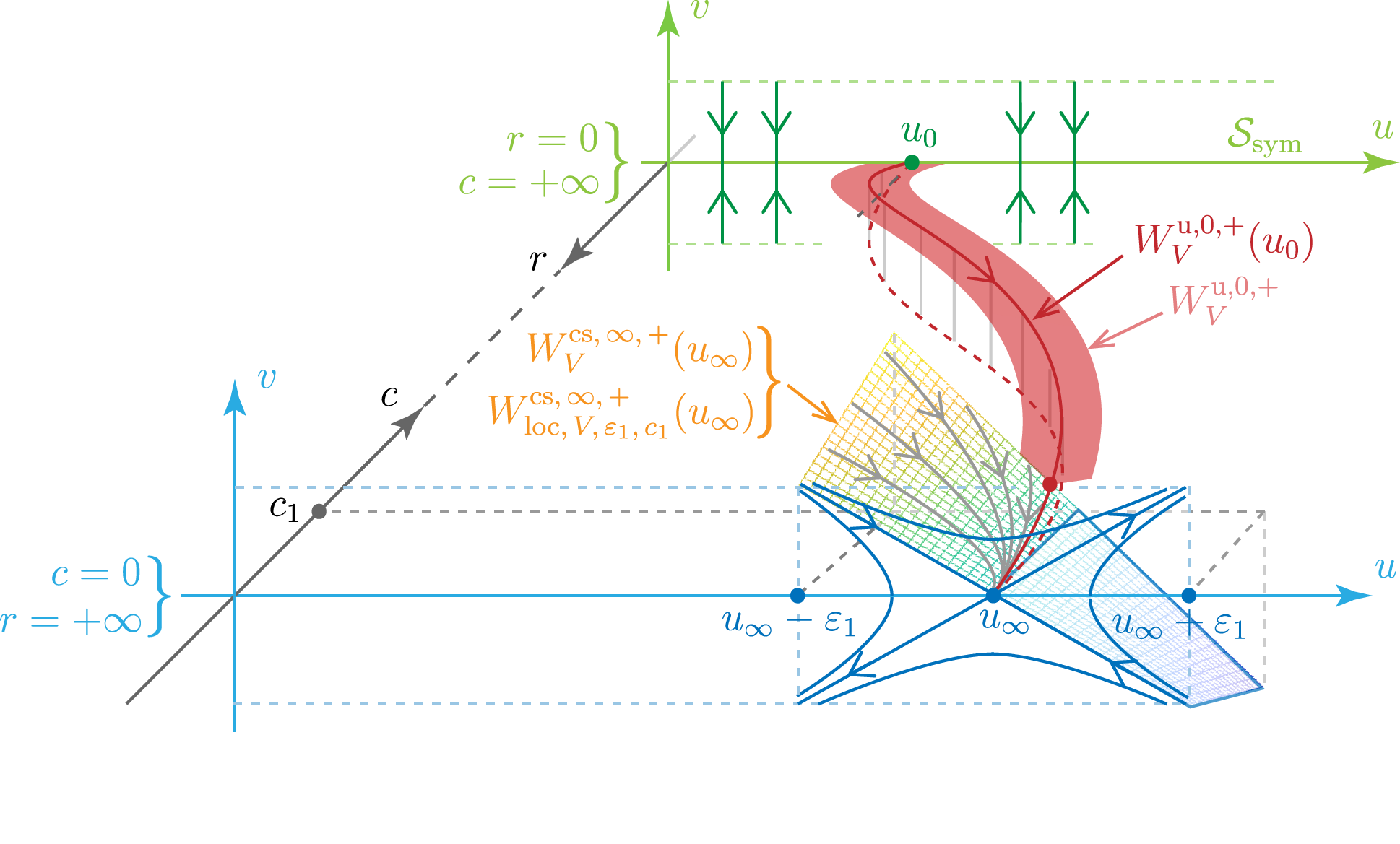}
\caption{Dynamics of the (equivalent) differential systems \cref{differential_system_autonomous_time_tau} (for $r$ nonnegative finite) and \cref{differential_system_autonomous_time_r} (for $c=1/r$ nonnegative finite) in $\rr^{\dState}\times\rr^{\dState}\times[0,+\infty]$ (this domain is three-dimensional if $\dState$ is equal to $1$, as on the figure). For the limit differential system \cref{differential_system_order_1_limit_r_equals_0} in the subspace $r=0$ (in green), the trajectories are vertical and the solutions converge towards the horizontal $u$-axis, defined as $\sssSym$ in \cref{notation_sssSym}, and which is the higher space dimensional analogue of the symmetry subspace for symmetric standing pulses in space dimension $1$. The point $u_\infty$ is a local minimum point of $V$, so that the point $(u_\infty,0_{\rr^{\dState}})$ is a hyperbolic equilibrium for the limit differential system \cref{differential_system_order_1_limit_r_goes_to_infty} in the subspace $c=0\iff r=+\infty$ (in blue). Systems \cref{differential_system_autonomous_time_tau,differential_system_autonomous_time_r} are autonomous, but the quantity $r$ (the quantity $c$) goes monotonously from $0$ to $+\infty$ (from $+\infty$ to $0$) for all the solutions in the subspace $r>0\iff c>0$, so that those solutions can be parametrized with $r$ (with $c$) as time. The unstable manifold $\WuOriginPlus_V(u_0)$ is one-dimensional and is a transverse intersection between the unstable set $\WuOriginPlus_V$ of the subspace $\{r=0,v=0_{\rr^{\dState}}\}$ and the centre stable manifold $\WcsInftyPlus_V(u_\infty)$ of the equilibrium $(u_\infty,0_{\rr^{\dState}},c=0)$. To prove the generic transversality of this intersection is the main goal of the paper. The dotted red curve is the projection onto the $(u,r)$-subspace of this intersection. The part of $\WcsInftyPlus_V(u_\infty)$ which is displayed on the figure can also be seen as the \emph{local} centre stable manifold $\WcslocInftyPlus{V}{\varepsilon_1}{c_1}(u_\infty)$ defined in \cref{def_WcslocPlus} (with $u_\infty$ equal to the point $u_{\infty,1}$ introduced there).}
\label{fig:local_centre_stable_man_at_infinity}
\end{figure}
\paragraph*{Properties close to origin.}
System \cref{differential_system_autonomous_time_tau} is relevant to provide an insight into the limit system \cref{differential_system_radially_symmetric_stationary_order_1} as $r$ goes to $0$. The subspace $\rr^{2\dState}\times\{0\}$ ($r$ equal to $0$) is invariant by the flow of this system, and the system reduces on this invariant subspace to
\begin{equation}
\label{differential_system_order_1_limit_r_equals_0}
\left\{
\begin{aligned}
u_\tau &= 0 \\
v_\tau &= -(\dSpace-1)v 
\,,
\end{aligned}
\right.
\end{equation}
see \cref{fig:local_centre_stable_man_at_infinity}. For every $u_0$ in $\rr^{\dState}$, the point $(u_0,0_{\rr^{\dState}},0)$ is an equilibrium of system \cref{differential_system_autonomous_time_tau}; let us denote by $\WuOrigin_V(u_0)$ the (one-dimensional) unstable manifold of this equilibrium, for this system, let 
\begin{equation}
\label{def_WuOriginPlus_V_of_u0}
\WuOriginPlus_V(u_0) = \WuOrigin_V(u_0)\cap\bigl(\rr^{2\dState}\times(0,+\infty)\bigr)
\,,
\end{equation}
and let
\[
\WuOriginPlus_V = \bigsqcup_{u_0\in\rr^{\dState}} \WuOriginPlus_V(u_0)
\,.
\]
The subspace 
\begin{equation}
\label{notation_sssSym}
\sssSym = \rr^{\dState}\times\{0_{\rr^{\dState}}\}\times\{0\}
\end{equation}
of $\rr^{2\dState+1}$ can be seen as the higher space dimension analogue of the symmetry (reversibility) subspace $\rr^{\dState}\times\{0_{\rr^{\dState}}\}$ of $\rr^{2\dState}$ (which is relevant for symmetric standing pulses in space dimension $1$, see \cite{JolyRisler_genericTransversalityTravStandFrontsPulses_2023} and \cref{subsec:similarites_differences_with_standing_pulses} below); the set $\WuOriginPlus_V$ can be seen as the unstable manifold of this subspace $\sssSym$. 
\paragraph*{Properties close to infinity.}
System \cref{differential_system_autonomous_time_r} is relevant to provide an insight into the limit system \cref{differential_system_radially_symmetric_stationary_order_1} as $r$ goes to $+\infty$. The subspace $\rr^{2\dState}\times\{0\}$ of $\rr^{2\dState+1}$ ($c$ equal to $0$, or in other words $r$ equal to $+\infty$) is invariant by the flow of this system, and the system reduces on this invariant subspace to
\begin{equation}
\label{differential_system_order_1_limit_r_goes_to_infty}
\left\{
\begin{aligned}
u_r &= v \\
v_r &= \nabla V(u) 
\,.
\end{aligned}
\right.
\end{equation}
For every $u_\infty$ in $\Sigma_{\min}(V)$, the point $(u_\infty,0_{\rr^{\dState}},0)$ is an equilibrium of system \cref{differential_system_autonomous_time_r}; let us consider its global centre-stable manifold in $\rr^{2\dState}\times(0,+\infty)$, defined as
\begin{equation}
\label{def_WcsInftyPlus_of_uInfty}
\begin{aligned}
\WcsInftyPlus_V(u_\infty) &= \Bigl\{
(u_0,v_0,c_0)\in\rr^{2\dState}\times(0,+\infty): \text{ the solution of system \cref{differential_system_autonomous_time_r}}\\
&\text{with initial condition $(u_0,v_0,c_0)$ at ``time'' $r_0=1/c_0$ is}\\
&\text{defined up to $+\infty$ and goes to $(u_\infty,0,0)$ as $r$ goes to $+\infty$}
\Bigr\}
\,.
\end{aligned}
\end{equation}
This set $\WcsInftyPlus_V(u_\infty)$ is a $\dState+1$-dimensional submanifold of $\rr^{2\dState}\times(0,+\infty)$ (see \cref{subsec:behaviour_solutions_stable_at_infinity}). 
\paragraph*{Radially symmetric stationary solutions.}
Let us consider the involution
\[
\iota:\rr^{2\dState}\times(0,+\infty)\to \rr^{2\dState}\times(0,+\infty)
\,,\quad
(u,v,r)\mapsto (u,v,1/r)
\,.
\]
The following lemma, proved in \cref{subsec:proof_of_lem_equivalence_rad_sym_stat_stable_at_infty_intersection_Wu_Ws}, formalizes the correspondence between the radially symmetric stationary solutions stable at infinity for system \cref{parabolic_system_higher_space_dimension} and the manifolds defined above. 
\begin{lemma}
\label{lem:equivalence_rad_sym_stat_stable_at_infty_intersection_Wu_Ws}
Let $u_\infty$ be a point of $\Sigma_{\min}(V)$. A (global) solution $[0,+\infty)\to\rr^{\dState}$, $r\mapsto u(r)$ of system \cref{differential_system_radially_symmetric_stationary_order_2} belongs to $\ssss_{\VuInfty}$ if and only if its trajectory (in $\rr^{2\dState}\times(0,+\infty)$)
\begin{equation}
\label{trajectory_time_tau}
\Bigl\{\bigl(u(r),\dot u(r),r\bigr):r\in(0,+\infty)\Bigr\}
\end{equation}
belongs to the intersection
\begin{equation}
\label{intersection_unstable_stable_manifolds}
\WuOriginPlus_V\cap \iota^{-1}\bigl(\WcsInftyPlus_V(u_\infty)\bigr)
\,.
\end{equation}
\end{lemma}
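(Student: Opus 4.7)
The plan is to derive both implications of the equivalence directly from the definitions, using the key observation that the trajectory \cref{trajectory_time_tau} is, as a subset of $\rr^{2\dState+1}$, simultaneously an orbit of the autonomous system \cref{differential_system_autonomous_time_tau} (under the parameterization $\tau = \ln r$) and, after applying $\iota$, an orbit of \cref{differential_system_autonomous_time_r} (under the parameterization $c = 1/r$).

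For the direct implication I would take $u \in \ssss_{\VuInfty}$, write $\gamma(r) = (u(r), \dot u(r), r)$, and treat the behaviour at the two endpoints separately. At the origin, the excerpt recalls that $u$ is the restriction to $[0,+\infty)$ of an even $\ccc^3$ function on $\rr$, so Taylor expansion yields $u(r) - u(0) = O(r^2)$ and $\dot u(r) = O(r)$; in the variable $\tau$ this is exponential convergence of $\gamma$ to the equilibrium $\bigl(u(0), 0_{\rr^{\dState}}, 0\bigr) \in \sssSym$ as $\tau \to -\infty$. The linearization of \cref{differential_system_autonomous_time_tau} at this equilibrium has eigenvalues $0$ (multiplicity $\dState$, on the $u$-directions), $-(\dSpace-1)$ (multiplicity $\dState$, on the $v$-directions) and $1$ (on the $r$-direction), so its unstable manifold is one-dimensional; any orbit converging exponentially in backward time necessarily lies on that (strong) unstable manifold. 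Hence $\gamma \subset \WuOrigin_V\bigl(u(0)\bigr)$, and since $r > 0$ along $\gamma$, this gives $\gamma \subset \WuOriginPlus_V\bigl(u(0)\bigr) \subset \WuOriginPlus_V$. At infinity, $\iota\circ\gamma$ parameterized by $r$ is an orbit of \cref{differential_system_autonomous_time_r}; the assumption $u(r)\to u_\infty$, together with a standard argument deducing $\dot u(r)\to 0$ from the nondegeneracy of $u_\infty$ and the ODE, yields $\iota(\gamma(r))\to (u_\infty,0_{\rr^{\dState}},0)$ as $r\to+\infty$, which by definition \cref{def_WcsInftyPlus_of_uInfty} places $\iota(\gamma)$ in $\WcsInftyPlus_V(u_\infty)$, i.e.\ $\gamma \subset \iota^{-1}\bigl(\WcsInftyPlus_V(u_\infty)\bigr)$.

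The converse implication is an immediate unfolding: membership of $\gamma$ in $\WuOriginPlus_V$ forces $\bigl(u(r),\dot u(r)\bigr) \to (u_0,0_{\rr^{\dState}})$ as $r\to 0^+$ for some $u_0\in\rr^{\dState}$, yielding the boundary condition \cref{boundary_condition_at_r_equals_0_order_2}, while membership in $\iota^{-1}\bigl(\WcsInftyPlus_V(u_\infty)\bigr)$ forces $u(r)\to u_\infty$ as $r\to+\infty$; combined with the fact that $u$ satisfies \cref{differential_system_radially_symmetric_stationary_order_2} on $(0,+\infty)$, this is exactly the definition of $\ssss_{\VuInfty}$.

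The only step that is not a direct chase through definitions is the exponential-decay argument at the origin, which is needed to capture $\gamma$ by the genuine unstable manifold of the equilibrium in $\sssSym$ rather than merely by its unstable set. This is where the even $\ccc^3$ extension of $u$ (equivalently, the regular behaviour at $r=0$ forced by the boundary condition \cref{boundary_condition_at_r_equals_0_order_2}) plays its role; everything else is bookkeeping.
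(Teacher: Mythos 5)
Your overall architecture matches the paper's: the lemma is a definition-unwinding exercise except for one analytic fact, namely that a solution of \cref{differential_system_radially_symmetric_stationary_order_2} with $u(r)\to u_\infty$ automatically satisfies $\dot u(r)\to 0$ as $r\to+\infty$ --- without which membership of $\iota(\gamma)$ in $\WcsInftyPlus_V(u_\infty)$, which by \cref{def_WcsInftyPlus_of_uInfty} requires convergence to $(u_\infty,0_{\rr^{\dState}},0)$ and not merely of the $u$-component, cannot be concluded. Your treatment of the origin is correct and in fact more detailed than the paper's (which takes for granted that backward exponential convergence in the variable $\tau=\log r$ places the orbit on the one-dimensional strong unstable manifold of $(u(0),0_{\rr^{\dState}},0)$), and the converse implication is, as you say, immediate.

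The gap is that the one step carrying all the content is precisely the one you defer to ``a standard argument deducing $\dot u(r)\to 0$ from the nondegeneracy of $u_\infty$ and the ODE''. That step is the entirety of what the paper actually proves (\cref{lem:derivative_goes_to_0_when_r_goes_to_infty}), and the mechanism is not the nondegeneracy of $u_\infty$ --- which plays no role there --- but dissipation: setting $h(r)=H_V\bigl(u(r),\dot u(r)\bigr)=\tfrac{1}{2}\dot u(r)^2-V\bigl(u(r)\bigr)$ as in \cref{Hamiltonian}, one has $\dot h(r)=-\tfrac{\dSpace-1}{r}\,\dot u(r)^2\le 0$ by \cref{Ham_decrase}, so $h$ decreases to a finite limit $h_\infty\ge -V(u_\infty)$; if $h_\infty$ were larger than $-V(u_\infty)$, then $\dot u(r)^2$ would tend to the positive constant $2\bigl(h_\infty+V(u_\infty)\bigr)$, and the non-integrability of $1/r$ at infinity would force $h(r)\to-\infty$, a contradiction. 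Nondegeneracy cannot substitute for this: until you know $\dot u$ is small you cannot localize the orbit near the hyperbolic equilibrium $(u_\infty,0_{\rr^{\dState}},0)$ of \cref{differential_system_autonomous_time_r}, so an argument based on the linearization at $u_\infty$ does not get off the ground. Supplying the dissipation argument (or an equivalent one, e.g.\ boundedness of $\dot u$ and $\ddot u$ followed by interpolation) is what is needed to close the proof.
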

\subsection{Transversality of radially symmetric stationary solutions stable at infinity}
\begin{definition}
\label{def:transversality_rad_symm_solution_stable_at_infinity}
Let $u_\infty$ be a point of $\Sigma_{\min}(V)$. A radially symmetric stationary solution stable close to $u_\infty$ at infinity for system \cref{parabolic_system_higher_space_dimension} (in other words, a function $u$ of $\ssss_{\VuInfty}$) is said to be \emph{transverse} if the intersection \cref{intersection_unstable_stable_manifolds} is transverse, in $\rr^{2\dState}\times(0,+\infty)$, along the trajectory \cref{trajectory_time_tau}. 
\end{definition}  
\begin{remark}
The natural analogue of radially symmetric stationary solutions stable at infinity when space dimension $\dSpace$ is equal to $1$ are symmetric standing pulses stable at infinity (see \mainPaperDefSymmetricStandingPulse{} of \cite{JolyRisler_genericTransversalityTravStandFrontsPulses_2023}), and the natural analogue for such pulses of \cref{def:transversality_rad_symm_solution_stable_at_infinity} above is their \emph{elementarity}, not their transversality (see \mainPaperDefTransversePulse{} and \mainPaperDefElementarySymmetricStandingPulse{} of \cite{JolyRisler_genericTransversalityTravStandFrontsPulses_2023}). However, the transversality of a symmetric standing pulse (when the space dimension $\dSpace$ equals $1$) makes little sense in higher space dimension, because of the singularity at $r$ equals $0$ for the differential systems \cref{differential_system_radially_symmetric_stationary_order_2,differential_system_radially_symmetric_stationary_order_1}, or because of the related fact that the subspace $\{r=0\}$ is invariant for the differential system \cref{differential_system_autonomous_time_tau}. For that reason, the adjective \emph{transverse} (not \emph{elementary}) is chosen to qualify the property considered in \cref{def:transversality_rad_symm_solution_stable_at_infinity} above.
\end{remark}
\subsection{The space of potentials}
\label{subsec:space_of_potentials}
For the remaining of the paper, let us take and fix an integer $k$ not smaller than $1$. Let us consider the space $\CkbFull$ of functions $\rr^d\to\rr$ of class $\ccc^{k+1}$ which are bounded, as well as their derivatives of order not larger than $k+1$, equipped with the norm
\[
\norm{W}_{\CkbShort} = \max_{\alpha\text{ multi-index, }|\alpha|\leq k+1} \|\partial^{|\alpha|}_{u_\alpha} W\|_{L^\infty(\rr^d,\rr)}
\,, 
\]
and let us embed the larger space $\CkFull$ with the following topology: for $V$ in this space, a basis of neighbourhoods of $V$ is given by the sets $V+\ooo$, where $\ooo$ is an open subset of $\CkbFull$ embedded with the topology defined by $\norm{\cdot}_{\CkbShort}$ (which can be viewed as an extended metric). For comments concerning the choice of this topology, see \mainPaperSubsectionSpaceOfPotentials{} of \cite{JolyRisler_genericTransversalityTravStandFrontsPulses_2023}. 
\subsection{Main result}
The following generic transversality statement is the main result of this paper. 
\begin{theorem}[generic transversality of radially symmetric stationary solutions stable at infinity]
\label{thm:main}
There exists a generic subset $\gggg$ of $\left(\CkFull,\normCkb\right)$ such that, for every potential function $V$ in $\gggg$, every radially symmetric stationary solution stable at infinity of the parabolic system \cref{parabolic_system_higher_space_dimension} is transverse. 
\end{theorem}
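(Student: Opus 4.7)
By \cref{lem:equivalence_rad_sym_stat_stable_at_infty_intersection_Wu_Ws}, transversality of a solution $u\in\ssss_{\VuInfty}$ is equivalent to transversality, along its trajectory, of the intersection $\WuOriginPlus_V\cap\iota^{-1}\bigl(\WcsInftyPlus_V(u_\infty)\bigr)$ inside $\rr^{2\dState}\times(0,+\infty)$. Each of these two submanifolds has dimension $\dState+1$ in the ambient space of dimension $2\dState+1$, so a transverse intersection is one-dimensional, hence locally isolated when viewed as a collection of trajectories. My plan is to apply a Sard--Smale / Abraham parametric transversality theorem with $V$ playing the role of the abstract parameter, and then to assemble the local genericity statements obtained via the Baire category theorem.

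The first step is to reduce the theorem to a countable family of compact problems. For each minimum point $u_\infty\in\Sigma_{\min}(V)$, each compact set $K\subset\rr^\dState$ of candidate initial values $u(0)$, and each compact interval $[r_-,r_+]\subset(0,+\infty)$, I would restrict attention to those $u\in\ssss_{\VuInfty}$ with $u(0)\in K$ whose trajectory meets the local centre-stable piece of $\WcsInftyPlus_V(u_\infty)$ at some $r\in[r_-,r_+]$. It suffices to show that the set of potentials $V$ for which every such solution is transverse is open and dense in $\CkFull$: the Baire category theorem will then deliver the generic set $\gggg$. Openness will follow from compactness of the restricted family together with smooth dependence of $\WuOriginPlus_V$ and $\WcsInftyPlus_V(u_\infty)$ on $V$, while density is the heart of the Sard--Smale argument.

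To implement Sard--Smale, I would introduce, in suitable local charts, an evaluation map $\Phi(u_0,r_\star,V)$ measuring, in a slice transverse to the chosen local centre-stable manifold of $(u_\infty,0_{\rr^{\dState}},0)$, the signed distance between that manifold and the image at ``time'' $r_\star$ of the flow of \cref{differential_system_autonomous_time_tau} emanating from the unstable manifold $\WuOriginPlus_V(u_0)$. A trajectory realising the intersection corresponds to a zero of $\Phi$, and transversality of the intersection at such a zero is equivalent to surjectivity of the partial derivative $D_{(u_0,r_\star)}\Phi$. The crux is then to verify the parametric surjectivity: at every zero of $\Phi$, the full derivative $D\Phi$ (which also varies $V$) is onto. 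Once this holds, Abraham's theorem yields a residual set of potentials for which $0$ is a regular value, and the compactness from the previous step upgrades ``residual'' to ``open and dense''.

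The main obstacle is the proof of this parametric surjectivity. A perturbation $\delta V$ induces a perturbation $D_V\Phi\cdot\delta V$ obtained by solving a linear non-autonomous ODE along the trajectory, forced by $\nabla(\delta V)\bigl(u(r)\bigr)$. The point is that, by choosing $\delta V\in\CkbFull$ supported in an arbitrarily thin tubular neighbourhood of the image curve $\{u(r):r\in[r_-,r_+]\}$, one can steer the solution of this linear system at ``time'' $r_\star$ into any prescribed direction of the cokernel of $D_{(u_0,r_\star)}\Phi$. This is a controllability statement that relies on the image curve being embedded off a discrete set of parameters: were it to fail on a non-trivial subinterval, backward uniqueness for \cref{differential_system_radially_symmetric_stationary_order_2} would force $u$ to be constant at a critical point of $V$, contradicting the non-trivial boundary behaviour. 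The conclusion then follows, exactly as in the companion paper \cite{JolyRisler_genericTransversalityTravStandFrontsPulses_2023}, by a finite patchwork of localised bump perturbations along the image. The singularity at $r=0$ and the damping coefficient $(\dSpace-1)/r$ cause no trouble once the argument has been confined to the compact window $[r_-,r_+]\subset(0,+\infty)$, which is precisely why the initial compact reduction was made.
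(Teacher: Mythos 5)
Your overall architecture (reduction to a countable family of compact problems, a Sard--Smale/Abraham evaluation map with $V$ as parameter, openness from compactness plus density from parametric surjectivity, then Baire) is the same as the paper's, but the proposal has two genuine gaps, both located exactly where the paper has to work hardest. First, the constant solution $r\mapsto u_\infty$ belongs to $\ssss_{\VuInfty}$ and must be proved transverse, yet your controllability argument is vacuous for it: its image curve is the single point $u_\infty$, and any bump supported in a ``thin tubular neighbourhood of the image'' either does nothing or moves the centre-stable manifold $\WcsInftyPlus_V(u_\infty)$ itself (whose invariance under the perturbation you implicitly need). Your aside that degeneracy would ``contradict the non-trivial boundary behaviour'' is wrong --- constants are admissible and are not excluded by the boundary conditions. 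The paper disposes of them by a separate, perturbation-free monotonicity argument (the quantity $r^{\dSpace-1}\,u\cdot\dot u$ is strictly increasing along nonzero solutions of the linearization; see \cref{prop:transversality_of_homogeneous_solutions}), and this case must be split off \emph{before} the Sard--Smale step. Relatedly, even for nonconstant solutions the perturbation must be supported away from a fixed ball around $u_{1,\infty}$ so as not to move the local centre-stable manifold; a tubular neighbourhood of the whole image curve (which accumulates on $u_\infty$) violates this.

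Second, the ``controllability statement'' is asserted, not proved, and embeddedness of the curve is not the real obstruction. A bump $W$ acts through $r\mapsto\nabla W\bigl(u(r)\bigr)$, so what matters is (i) that there is an interval of radii on which each value $u(r)$ is attained \emph{only at that radius} by the whole solution --- the paper obtains such an interval $\IOnce$ near $r=0$ from the strict decrease of the Hamiltonian (\cref{lem:dot_u_does_not_vanish_and_values_reached_once}), not from backward uniqueness, which in any case does not control distant self-intersections of the image --- and (ii) that the resulting pairing $\int \nabla W\bigl(u(r)\bigr)\cdot\psi(r)\,dr$ with the adjoint solution $\psi$ can be made nonzero. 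Point (ii) fails for bump-gradient perturbations precisely when $\psi(r)=\alpha\,\dot u(r)$ with $\alpha$ constant on $\IOnce$, since the integral then collapses to the total variation $\alpha\bigl(W(u(\rOnce))-W(u(0))\bigr)$; the paper shows this case \emph{cannot be precluded} here (unlike for standing pulses in the companion paper) and handles it by choosing $W$ with $W\bigl(u(0)\bigr)\neq 0$, exploiting that $u(0)$ is an endpoint of the arc on the symmetry subspace (see Case~3 of the proof of \cref{lem:existence_suitable_perturbation_direction_W}). Without this case analysis your density step is incomplete.
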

\Cref{thm:main} can be viewed as the extension to higher space dimensions (for radially symmetric solutions) of conclusion 2 of \mainPaperMainTheorem{} of \cite{JolyRisler_genericTransversalityTravStandFrontsPulses_2023} (which is concerned with elementary standing pulses stable at infinity in space dimension $1$). A short comparison between these two results and their proofs is provided in the next \namecref{subsec:similarites_differences_with_standing_pulses}. For more comments and a short historical review on transversality results in similar contexts, see \mainPaperSubsectionHistoricalReview{} of the same reference.

The core of the paper (\cref{sec:generic_transversality_potentials_quadratic_past_given_radius}) is devoted to the proof of the conclusions of \cref{thm:main} among potentials which are quadratic past a certain radius (defined in \cref{notation_vvvQuad_of_R}), as stated in \cref{prop:generic_transversality_potentials_quadratic_past_given_radius}. The extension to general potentials of $\CkbFull$ is carried out in \cref{sec:proof_main_results}.
\begin{remark}
As in \cite{JolyRisler_genericTransversalityTravStandFrontsPulses_2023} (see \mainPaperTheoremExtensionEigenspaceSmallestEigenvalueAtInfty{} of that reference), the same arguments could be called upon to prove that the following additional conclusions hold, generically with respect to the potential $V$: 
\begin{enumerate}
\item for every minimum point of $V$, the smallest eigenvalue of $D^2V$ at this minimum point is simple;
\item every radially symmetric stationary solution stable at infinity of the parabolic system \cref{parabolic_system_higher_space_dimension} approaches its limit at infinity tangentially to the eigenspace corresponding to the smallest eigenvalue of $D^2V$ at this point. 
\end{enumerate}
\end{remark}
\subsection{Key differences with the generic transversality of standing pulses in space dimension one}
\label{subsec:similarites_differences_with_standing_pulses}
\Cref{table:comparison} lists the key differences between the proof of the generic elementarity of symmetric standing pulses carried out in \cite{JolyRisler_genericTransversalityTravStandFrontsPulses_2023}, and the proof of the generic transversality of radially symmetric stationary solutions carried out in the present paper (implicitly, the other steps/features of the proofs are similar or identical). The state dimension, which is simply denoted by $d$ in \cite{JolyRisler_genericTransversalityTravStandFrontsPulses_2023}, is here denoted by $\dState$ in both cases. Some of the notation/rigour is lightened. 
\begin{table}[htbp]
\centering
{\footnotesize
\begin{tabular}{|p{.25\textwidth}|p{.31\textwidth}|p{.35\textwidth}|}
\hline
& \makecell{\textbf{Symmetric standing pulse}} & \makecell{\textbf{Radially symmetric} \\ \textbf{stationary solution}} \\ \hline
\makecell[l]{Critical point at infinity} & \makecell[l]{critical point $e$, $E=(e,0_{\rr^{\dState}})$} & \makecell[l]{minimum point $u_\infty$} \\ \hline
\makecell[l]{Symmetry subspace $\sssSym$} & \makecell[l]{$\{(u,v)\in\rr^{2\dState}:v=0\}$, \\ dimension $\dState$} & \makecell[l]{$\{(u,v,r)\in\rr^{2\dState+1}:(v,r)=(0,0)\}$, \\ dimension $\dState$} \\ \hline
\makecell[l]{Differential system \\ governing the profiles} & \makecell[l]{autonomous, conservative, \\ regular at $\sssSym$} & \makecell[l]{non-autonomous, dissipative, \\singular at reversibility subspace} \\ \hline
\makecell[l]{Direction of the flow} & \makecell[l]{$E\to\sssSym$} & \makecell[l]{$\sssSym\to u_\infty$} \\ \hline
\makecell[l]{Invariant manifold at \\ infinity} & \makecell[l]{$\Wu(E)$, dimension $\dState-m(e)$} & \makecell[l]{$\WcsInftyPlus(u_\infty)$, dimension $\dState+1$} \\ \hline
\makecell[l]{Invariant manifold at \\symmetry subspace} & \makecell[l]{none} & \makecell[l]{$\WuOriginPlus$, dimension $\dState+1$} \\ \hline
\makecell[l]{Transversality} & \makecell[l]{$\Wu(E)\pitchfork\sssSym$} & \makecell[l]{$\WcsInftyPlus(u_\infty)\pitchfork\WuOriginPlus$} \\ \hline
\makecell[l]{Transversality of spatially \\ homogeneous solutions} & \makecell[l]{irrelevant} & \makecell[l]{\cref{prop:transversality_of_homogeneous_solutions}} \\ \hline
\makecell[l]{Interval $\IOnce$ (values \\ reached only once)} & \makecell[l]{``anywhere''} & \makecell[l]{close to $\sssSym$} \\ \hline
\makecell[l]{$\mmm$ (departure set of $\Phi$)} & \makecell[l]{parametrization of $\partial\WulocZero{V}(E)$ \\ and time, dimension $\dState-m(e)$} & \makecell[l]{$\sssSym$ and $\WcslocInftyPlusMinimized(u_\infty)$ at $r=N$, \\ dimension $2\dState$} \\ \hline
\makecell[l]{$\nnn$ (arrival set of $\Phi$)} & \makecell[l]{$\rr^{2\dState}$} & \makecell[l]{$\rr^{2\dState}\times\rr^{2\dState}$} \\ \hline
\makecell[l]{$\www$ (target manifold)} & \makecell[l]{$\sssSym$} & \makecell[l]{diagonal of $\nnn$} \\ \hline
\makecell[l]{$\dim(\mmm)-\codim(\www)$} & \makecell[l]{$-m(e)$} & \makecell[l]{$0$} \\ \hline
\makecell[l]{Condition to be fulfilled \\ by perturbation $W$} & \makecell[l]{$\left\langle D\Phi(W)\bigm|(0,\psi)\right\rangle\not=0$} & \makecell[l]{$\left\langle D\Phiu(W)\bigm|(\phi,\psi)\right\rangle\not=0$} \\ \hline
\makecell[l]{Perturbation $W$, case 3} & \makecell[l]{precluded} & \makecell[l]{$W(u_0)\not=0$} \\ \hline
\end{tabular}
}
\caption{Formal comparison between the generic elementarity of symmetric standing pulses (space dimension $1$) proved in \cite{JolyRisler_genericTransversalityTravStandFrontsPulses_2023}, and the generic transversality of radially symmetric stationary solutions (higher space dimension $\dSpace$) proved in the present paper.}
\label{table:comparison}
\end{table}

Here are a few additional comments about these differences. 

Concerning the critical point at infinity, $u_\infty$ is assumed (here) to be a minimum point, whereas (in \cite{JolyRisler_genericTransversalityTravStandFrontsPulses_2023}) the Morse index of $e$ is any. Indeed, if the Morse index $m(u_\infty)$ of $u_\infty$ was positive, then the dimension of the centre-stable manifold $\WcsInftyPlus_V(u_\infty)$ would be equal to $\dState+m(u_\infty)+1$; as a consequence, proving the transversality of the intersection \cref{intersection_unstable_stable_manifolds} in that case would require more stringent regularity assumptions on $V$ (see hypothesis \mainPaperItemDimensionsSardSmaleTheorem{} of \mainPaperSardSmaleTheorem{} of \cite{JolyRisler_genericTransversalityTravStandFrontsPulses_2023}) while nothing particularly useful could be derived from this transversality. On the other hand, assuming that $u_\infty$ is a minimum point allows to view its local centre-stable manifold as a graph $(u,c)\mapsto v$ (see \cref{prop:loc_centre_stab_manifold_at_infinity}), which is slightly simpler. 

Concerning the interval $\IOnce$ providing values $u$ reached ``only once'' by the profile (\cref{lem:dot_u_does_not_vanish_and_values_reached_once}), the proof of the present paper takes advantage of the dissipation to find a convenient interval close to the ``departure point'' $u_0$, as was done in \cite{JolyRisler_genericTransversalityTravStandFrontsPulses_2023} for travelling fronts (whereas, for standing pulse, the interval is to be found ``anywhere'', thanks to the conservative nature of the differential system governing the profiles, see conclusion \mainPaperItemTFPropValuesReachedOnlyOnce{} of \mainPaperPropValuesReachedOnlyOnce{} of \cite{JolyRisler_genericTransversalityTravStandFrontsPulses_2023}).

Concerning the function $\Phi$ to which Sard--Smale theorem is applied in the present paper, both manifolds $\WuOriginPlus$ and $\WcsInftyPlus(u_\infty)$ depend on the potential $V$. However, the transversality of an intersection between these two manifolds can be seen as the transversality of the image of $\Phi$ with the (fixed) diagonal of $\rr^{2\dState}\times\rr^{2\dState}$, for a function $\Phi$ combining the parametrization of these two manifolds. This trick, which is the same as in \cite{JolyRisler_genericTransversalityTravStandFrontsPulses_2023} for travelling fronts, allows to apply Sard--Smale theorem to a function $\Phi$ with a fixed arrival space $\nnn$ containing a fixed target manifold $\www$ (in this case the diagonal of $\nnn$). By contrast, for symmetric standing pulses in \cite{JolyRisler_genericTransversalityTravStandFrontsPulses_2023}, since the subspace $\sssSym$ involved in the transverse intersection is fixed, the previous trick is unnecessary and the setting is simpler. 

Finally, a technical difference occurs in ``case 3'' of the proof that the degrees of freedom provided by perturbing the potential allow to reach enough directions in the arrival state of $\Phi$ (\cref{lem:existence_suitable_perturbation_direction_W}, which is the core of the proof). In \cite{JolyRisler_genericTransversalityTravStandFrontsPulses_2023}, case 3 is shown to lead to a contradiction, not only for symmetric standing pulses, but also for asymmetric ones and for travelling fronts. Here, such a contradiction does not seem to occur (or at least is more difficult to prove), but this has no harmful consequence: a suitable perturbation of the potential can still be found in this case. 
\section{Preliminary properties}
\subsection{Proof of \texorpdfstring{\cref{lem:equivalence_rad_sym_stat_stable_at_infty_intersection_Wu_Ws}}{Lemma \ref{lem:equivalence_rad_sym_stat_stable_at_infty_intersection_Wu_Ws}}}
\label{subsec:proof_of_lem_equivalence_rad_sym_stat_stable_at_infty_intersection_Wu_Ws}
Let $V$ denote a potential function in $\CkFull$. Let $(0,+\infty)\to\rr^{\dState}$, $r\mapsto u(r)$ denote a (global) solution of system \cref{differential_system_radially_symmetric_stationary_order_2}, assumed to be stable close to some point $u_\infty$ of $\Sigma_{\min}(V)$ at infinity (\cref{def:stable_close_to_m_at_infinit}). \Cref{lem:equivalence_rad_sym_stat_stable_at_infty_intersection_Wu_Ws} follows from the next lemma. 
\begin{lemma}
\label{lem:derivative_goes_to_0_when_r_goes_to_infty}
The derivative $\dot u(r)$ goes to $0$ as $r$ goes to $+\infty$. 
\end{lemma}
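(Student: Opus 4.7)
The plan is to use the natural ``mechanical energy'' associated with the radial equation \cref{differential_system_radially_symmetric_stationary_order_2}, which is monotonically dissipated thanks to the damping term $-\frac{\dSpace-1}{r}\dot u$, and to combine its monotonicity with the hypothesis $u(r)\to u_\infty$ to force $\dot u(r)\to 0$.

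Concretely, I would introduce
$$E(r) = \tfrac{1}{2}\abs{\dot u(r)}^2 - V\bigl(u(r)\bigr),$$
and check by a one-line computation using \cref{differential_system_radially_symmetric_stationary_order_2} that $\dot E(r) = -\frac{\dSpace-1}{r}\abs{\dot u(r)}^2$. Since $\dSpace\geq 2$, this is nonpositive, so $E$ is nonincreasing on $(0,+\infty)$. On the other hand, the hypothesis $u(r)\to u_\infty$ together with continuity of $V$ shows that $V\bigl(u(r)\bigr)$ is bounded on $[1,+\infty)$, so $E$ is bounded below there. Hence $E(r)$ admits a finite limit $E_\infty$ at $+\infty$, and integrating the expression for $\dot E$ on $[1,+\infty)$ yields
$$\int_1^{+\infty} \frac{\dSpace-1}{r}\abs{\dot u(r)}^2 \, dr = E(1) - E_\infty < +\infty.$$

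To finish, I would observe that since $V\bigl(u(r)\bigr)\to V(u_\infty)$ and $E(r)\to E_\infty$, the quantity $\abs{\dot u(r)}^2 = 2\bigl(E(r)+V(u(r))\bigr)$ has a nonnegative limit $\ell$ at $+\infty$. If $\ell$ were positive, then $\abs{\dot u(r)}^2\geq \ell/2$ for $r$ large enough, which would force $\int^{+\infty} dr/r<+\infty$, a contradiction with the integrability just obtained. Therefore $\ell=0$, which is the desired conclusion.

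I do not anticipate any genuine obstacle beyond identifying the correct Lyapunov functional; the rest is a two-line computation plus an elementary dichotomy. It is worth noting in passing that neither the nondegeneracy nor even the minimum character of $u_\infty$ is used in this argument---only the continuity of $V$ at $u_\infty$ and the boundedness of $u$ near $+\infty$, both of which are consequences of the assumption $u(r)\to u_\infty$.
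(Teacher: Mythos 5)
Your proof is correct and follows essentially the same route as the paper: both introduce the energy $H_V(u,v)=\tfrac{1}{2}\abs{v}^2-V(u)$, use its dissipation identity $\dot h(r)=-\tfrac{\dSpace-1}{r}\abs{\dot u(r)}^2$ together with $u(r)\to u_\infty$ to get a finite limit $h_\infty$, and derive a contradiction from the divergence of $\int^{+\infty}dr/r$ if $\abs{\dot u(r)}^2$ had a positive limit. The only difference is cosmetic (you phrase the contradiction via integrability of $\dot h$, the paper via $h(r)\to-\infty$), and your closing remark that only continuity of $V$ and convergence of $u$ are used is accurate.
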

\begin{proof}
Let us consider the Hamiltonian function
\begin{equation}
\label{Hamiltonian}
H_V : \rr^{2\dState} \to \rr
\,,\quad
(u,v) \mapsto \frac{v^2}{2} - V(u)
\,,
\end{equation}
and, for every $r$ in $(0,+\infty)$, let
\[
h(r) = H_V\bigl(u(r),\dot u(r)\bigr)
\,.
\]
It follows from system \cref{differential_system_radially_symmetric_stationary_order_2} that, for every $r$ in $(0,+\infty)$, 
\begin{equation}
\label{Ham_decrase}
\dot h(r) = - \frac{\dSpace-1}{r}\dot u(r)^2
\,,
\end{equation}
thus the function $h(\cdot)$ decreases, and it follows from the expression \cref{Hamiltonian} of the Hamiltonian that this function converges, as $r$ goes to $+\infty$, towards a finite limit $h_\infty$ which is not smaller than $-V(u_\infty)$. 

Let us proceed by contradiction and assume that $h_\infty$ is larger than $-V(u_\infty)$. Then, it follows again from the expression \cref{Hamiltonian} of the Hamiltonian that the quantity $\dot u(r)^2$ converges towards the positive quantity $2\bigl(h_\infty+V(u_\infty)\bigr)$ as $r$ goes to $+\infty$. As a consequence, it follows from equality \cref{Ham_decrase} that $h(r)$ goes to $-\infty$ as $r$ goes to $+\infty$, a contradiction. \Cref{lem:derivative_goes_to_0_when_r_goes_to_infty} is proved. 
\end{proof}
\subsection{Transversality of homogeneous radially symmetric stationary solutions stable at infinity}
\begin{proposition}
\label{prop:transversality_of_homogeneous_solutions}
For every potential function $V$ in $\CkFull$ and for every nondegenerate minimum point $u_\infty$ of $V$, the constant function
\[
[0,+\infty)\to\rr^{\dState}\,,\quad r\mapsto u_\infty
\,,
\]
which defines an (homogeneous) radially symmetric stationary solution stable at infinity for system \cref{parabolic_system_higher_space_dimension}\,, is transverse (in the sense of \cref{def:transversality_rad_symm_solution_stable_at_infinity}). 
\end{proposition}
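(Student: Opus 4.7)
The plan is to reduce the transversality of the intersection $\WuOriginPlus_V\cap \iota^{-1}\bigl(\WcsInftyPlus_V(u_\infty)\bigr)$ at an arbitrary point $P_0:=(u_\infty, 0_{\rr^{\dState}}, r_0)$ of the constant trajectory to a uniqueness question for the linearised problem along $u \equiv u_\infty$, and then to settle that uniqueness by a monotone energy identity exploiting the positive definiteness of $A:=D^2V(u_\infty)$.

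First I would linearise the system \cref{differential_system_radially_symmetric_stationary_order_2} around the constant solution: a perturbation $\chi:(0,+\infty)\to\rr^{\dState}$ satisfies the Bessel-type equation
\[
\ddot\chi(r) + \frac{\dSpace-1}{r}\dot\chi(r) - A\chi(r) = 0 \,.
\]
Using the smooth dependence on the starting point $u_0\in\rr^{\dState}$ of the solution of \cref{differential_system_radially_symmetric_stationary_order_2,boundary_condition_at_r_equals_0_order_2} (which near the origin admits the expansion $u(r;u_0) = u_0 + \frac{r^2}{2\dSpace}\nabla V(u_0) + O(r^4)$), I would identify the tangent space to $\WuOriginPlus_V$ at $P_0$ with the set of vectors $\bigl(\chi(r_0),\dot\chi(r_0),\delta r_0\bigr)$ where $\chi$ solves the linearised equation with $\dot\chi(0)=0$ (regularity at $r=0$) and $\delta r_0\in\rr$ (flow direction); and, symmetrically, the tangent space to $\iota^{-1}\bigl(\WcsInftyPlus_V(u_\infty)\bigr)$ at $P_0$ with the set of analogous vectors in which $\chi(r)$ and $\dot\chi(r)$ tend to $0$ as $r\to+\infty$. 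Both spaces have dimension $\dState+1$ and both contain the flow direction $(0,0,1)$, so transversality at $P_0$ reduces to the claim that the only solution of the linearised equation satisfying simultaneously $\dot\chi(0)=0$ and $\chi(r),\dot\chi(r)\to0$ as $r\to+\infty$ is the trivial one: that would yield a $1$-dimensional intersection of the two tangent spaces and a sum of dimension $2\dState+1$, matching the dimension of the ambient space.

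For this uniqueness I would introduce the energy
\[
E(r) = \tfrac12\abs{\dot\chi(r)}^2 - \tfrac12 \bigl\langle A\chi(r),\chi(r)\bigr\rangle
\,,
\]
which, thanks to the linearised equation and the symmetry of $A$, satisfies $\dot E(r) = -\tfrac{\dSpace-1}{r}\abs{\dot\chi(r)}^2\le 0$. The decay hypothesis at infinity gives $E(r)\to 0$, while the regularity hypothesis at the origin yields $E(0^+) = -\tfrac12\langle A\chi(0),\chi(0)\rangle \le 0$. Monotonicity of $E$ then forces $E(0^+)=0$, whence $\chi(0)=0$ by positive definiteness of $A$, and therefore $\chi\equiv 0$ by uniqueness of the solution of the linearised equation with vanishing value and derivative at the origin.

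The energy identity is immediate and the arithmetic of dimensions is routine; the only point that really deserves care is the rigorous identification of the tangent space $T_{P_0}\WuOriginPlus_V$ through the parametrisation by $u_0$ in the symmetry subspace $\sssSym$, despite the singularity of the systems \cref{differential_system_autonomous_time_tau,differential_system_autonomous_time_r} on the axis $r=0$; the matching identification for the centre-stable side is in the same spirit and relies on the fact, announced in \cref{prop:loc_centre_stab_manifold_at_infinity}, that $\WcsInftyPlus_V(u_\infty)$ is locally a graph over its $(u,c)$-coordinates near the equilibrium at infinity.
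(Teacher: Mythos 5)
Your proof is correct and follows the same strategy as the paper's: linearise around the constant solution, reduce transversality to the claim that no nonzero solution of the linearised equation is simultaneously regular at the origin and decaying at infinity, and rule this out with a monotone quantity that exploits the damping term and the positive definiteness of $D^2V(u_\infty)$. The only difference is the choice of functional: the paper uses the virial-type quantity $r^{\dSpace-1}\,u\cdot\dot u$, which is strictly increasing yet tends to $0$ at both ends of $(0,+\infty)$, whereas you use the non-increasing energy $\tfrac12\lvert\dot\chi\rvert^2-\tfrac12\langle A\chi,\chi\rangle$ pinched between its sign constraints at the two ends; both arguments are equally valid.
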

\begin{proof}
Let $V$ denote a function in $\CkFull$ and $u_\infty$ denote a nondegenerate minimum point of $V$. The function $[0,+\infty)\to\rr^{\dState}$, $r\mapsto u_\infty$ is a (constant) solution of the differential system \cref{differential_system_radially_symmetric_stationary_order_1}, and the linearization of this differential system around this solution reads
\begin{equation}
\label{linearized_system_along_u_infty}
\ddot u = -\frac{\dSpace-1}{r} \dot u + D^2V(u_\infty)\cdot u
\,.
\end{equation}
Let $(0,+\infty)\to\rr^{\dState}$, $r\mapsto u(r)$ denote a nonzero solution of this differential system, and, for every $r$ in $(0,+\infty)$, let 
\[
v(r) = \dot u(r)
\quad\text{and}\quad
U(r) = \bigl(u(r),v(r)\bigr)
\quad\text{and}\quad
q(r) = \frac{u(r)^2}{2}
\,.
\]
Then (omitting the dependency on $r$), 
\[
\dot q = u\cdot\dot u 
\quad\text{and}\quad
\ddot q = \dot u^2 + u\cdot\ddot u = \dot u^2 - \frac{\dSpace-1}{r} \dot q + D^2V(u_\infty)\cdot(u,u)
\,,
\]
so that
\[
\frac{d}{dr}\bigl(r^{\dSpace-1}\dot q(r)\bigr) = r^{\dSpace-1}\left(\ddot q + \frac{\dSpace-1}{r} \dot q \right) = r^{\dSpace-1}\bigl(\dot u^2+ D^2V(u_\infty)\cdot(u,u)\bigr)
\,.
\]
Since $r\mapsto u(r)$ was assumed to be nonzero, it follows that the quantity $r^{\dSpace-1}\dot q(r)$ is strictly increasing on $(0,+\infty)$. To prove the intended conclusion, let us proceed by contradiction and assume that, for every $r$ in $(0,+\infty)$, $\bigl(u(r),v(r),r\bigr)$ belongs: 
\begin{enumerate}
\item to the tangent space $T_{(u_\infty,0_{\rr^{\dState}},r)} \WuOriginPlus_V(u_\infty)$, 
\label{item:u_dot_u_in_tangent_space_to_unstable_manifold}
\item \emph{and} to the tangent space $T_{(u_\infty,0_{\rr^{\dState}},r)} \Bigl(\iota^{-1}\bigl(\WcsInftyPlus_V(u_\infty)\bigr)\Bigr)$. 
\label{item:u_dot_u_in_tangent_space_to_centre_stable_manifold}
\end{enumerate}
As in \cref{tau_and_c}, let us introduce the auxiliary variables $\tau$ (equal to $\log(r)$) and $c$ (equal to $1/r$). With this notation, system \cref{linearized_system_along_u_infty} is equivalent to 
\begin{equation}
\label{differential_system_autonomous_time_tau_linearized_u_infty}
\left\{
\begin{aligned}
u_\tau &= rv \\
v_\tau &= -(\dSpace-1)v + r D^2V(u_\infty)\cdot u \\
r_\tau &= r
\,,
\end{aligned}
\right.
\end{equation}
and to
\begin{equation}
\label{differential_system_autonomous_time_r_linearized_u_infty}
\left\{
\begin{aligned}
u_r &= v \\
v_r &= -(\dSpace-1)c v + D^2V(u_\infty)\cdot u \\
c_r &= -c^2
\,.
\end{aligned}
\right.
\end{equation}
Assumptions \cref{item:u_dot_u_in_tangent_space_to_unstable_manifold,item:u_dot_u_in_tangent_space_to_centre_stable_manifold} above yield the following conclusions.
\begin{enumerate}
\item In view of the limit of system \cref{differential_system_autonomous_time_tau_linearized_u_infty} as $r$ goes to $0^+$, it follows from assumption \cref{item:u_dot_u_in_tangent_space_to_unstable_manifold} that there exists $\delta u_0$ in $\rr^{\dState}$ such that $\bigl(u(r),v(r)\bigr)$ goes to $(\delta u_0,0_{\rr^{\dState}})$ as $r$ goes to $0^+$;
\item and in view of the limit of system \cref{differential_system_autonomous_time_r_linearized_u_infty} as $c$ goes to $0^+$, it follows from assumption \cref{item:u_dot_u_in_tangent_space_to_centre_stable_manifold} that $\bigl(u(r),v(r)\bigr)$ goes to $(0_{\rr^{\dState}},0_{\rr^{\dState}})$, at an exponential rate, as $r$ goes to $+\infty$. 
\end{enumerate}
It follows from these two conclusions that the quantity $r^{\dSpace-1}\dot q(r)$ goes to $0$ as $r$ goes to $0^+$ \emph{and} as $r$ goes to $+\infty$, a contradiction with the fact (observed above) that this quantity is strictly increasing with $r$. \Cref{prop:transversality_of_homogeneous_solutions} is proved. 
\end{proof}
\subsection{Additional properties close to the origin}
\label{subsec:behaviour_close_to_origin}
Let $V$ denote a potential function in $\CkFull$ and let $u_0$ be a point in $\rr^{\dState}$. Let us recall (see \cref{subsec:differential_systems_gov_rad_symm_stat_sol}) that the unstable manifold $\WuOrigin_V(u_0)$ of the equilibrium $(u_0,0_{\rr^{\dState}},0)$ for the autonomous differential system \cref{differential_system_autonomous_time_tau}) is one-dimensional. As a consequence there exists a unique solution $r\mapsto u(r)$ of the differential system \cref{differential_system_radially_symmetric_stationary_order_2} such that the image of the map $r\mapsto\bigl(u(r),\dot u(r),r)$ lies in the intersection $\WuOriginPlus_V(u_0)$ of this unstable manifold with the half-space where $r$ is positive (this intersection was defined in \cref{def_WuOriginPlus_V_of_u0}); or, in other words, such that $\bigl(u(r),\dot u(r)\bigr)$ goes to $(u_0,0)$ as $r$ goes to $0^+$. This solution is defined on some (maximal) interval $(0,r_{\max})$, where $r_{\max}$ is either a finite quantity or $+\infty$. The following lemma provides properties of this solution that will be used in the sequel. To ease its statement, let us assume that $r_{\max}$ is equal to $+\infty$ (only this case will turn out to be relevant), and let us consider the continuous extension of $u(\cdot)$ to the interval $[0,+\infty)$ (and let us still denote by $u(\cdot)$ this continuous extension).
\begin{lemma}
\label{lem:dot_u_does_not_vanish_and_values_reached_once}
If $u(\cdot)$ is not identically equal to $u_0$ (in other words, if $u_0$ is not a critical point of $V$), then there exists a positive quantity $\rOnce$ such that, denoting by $\IOnce$ the interval $[0,\rOnce)$, the following conclusions hold:
\begin{enumerate}
\item the function $\dot u(\cdot)$ does not vanish on $\IOnce$,
\label{item:dot_u_does_not_vanish}
\item and, for every $r^*$ in $\IOnce$ and $r$ in $[0,+\infty)$, 
\[
u(r) = u(r^*)\implies r = r^*.
\]
\label{item:values_reached_once}
\end{enumerate}
\end{lemma}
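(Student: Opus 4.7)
The plan is to identify the leading-order behaviour of $u$ at $r=0$ and then to exploit a scalar projection combined with an integrating-factor estimate to derive both conclusions. The solution $u$ arising from the one-dimensional unstable manifold $\WuOriginPlus_V(u_0)$ is smooth on $[0,+\infty)$ with $u(0)=u_0$ and $\dot u(0)=0$. Passing to the limit $r\to 0^+$ in \cref{differential_system_radially_symmetric_stationary_order_2} and using L'H\^opital on $\dot u(r)/r$ yields $\ddot u(0)=\nabla V(u_0)/\dSpace$, which is nonzero since $u_0$ is not a critical point of $V$. The resulting expansion $\dot u(r)=\nabla V(u_0)\,r/\dSpace+O(r^2)$ as $r\to0^+$ immediately gives conclusion \cref{item:dot_u_does_not_vanish} on some interval $(0,r_1]$.

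For conclusion \cref{item:values_reached_once}, I would introduce the unit vector $e=\nabla V(u_0)/\left|\nabla V(u_0)\right|$ and the scalar projection $\psi(r)=e\cdot(u(r)-u_0)$. Taking the scalar product of \cref{differential_system_radially_symmetric_stationary_order_2} with $e$ and multiplying by the integrating factor $r^{\dSpace-1}$ yields
\[
\frac{d}{dr}\bigl(r^{\dSpace-1}\dot\psi(r)\bigr) = r^{\dSpace-1}\,e\cdot\nabla V\bigl(u(r)\bigr)
\,.
\]
Fix $\delta>0$ such that $e\cdot\nabla V\geq\left|\nabla V(u_0)\right|/2$ on $\overline{B(u_0,\delta)}$, and $r_2\in(0,r_1]$ such that $u([0,r_2])\subset B(u_0,\delta)$. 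Integrating the preceding identity from $0$ (where $r^{\dSpace-1}\dot\psi(r)\to 0$ by the Taylor expansion) gives $\dot\psi(r)\geq\left|\nabla V(u_0)\right|r/(2\dSpace)$ on $(0,r_2]$, so $\psi$ is strictly increasing on $[0,r_2]$. In particular, $u\rvert_{[0,r_2]}$ is injective, which settles the case of conclusion \cref{item:values_reached_once} with both $r$ and $r^*$ in $[0,r_2]$.

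It remains to choose $\rOnce\in(0,r_2)$ so that $u([r_2,+\infty))$ is disjoint from $u([0,\rOnce))$. Since $u([0,\rOnce])$ can be made arbitrarily close to $\{u_0\}$ by shrinking $\rOnce$, it suffices to show that $u_0\notin\overline{u([r_2,+\infty))}$. I would argue this by contradiction, using the Hamiltonian $h(r)=|\dot u(r)|^2/2-V(u(r))$ and its dissipation relation \cref{Ham_decrase}: any sequence $(\rho_n)\subset[r_2,+\infty)$ with $u(\rho_n)\to u_0$ either accumulates at some finite $\rho_\infty$ with $u(\rho_\infty)=u_0$, in which case restarting the integrating-factor estimate at $\rho_\infty$ combined with the strict decrease $h(\rho_\infty)<-V(u_0)$ delivers a contradiction, or tends to $+\infty$, in which case the integrability of $|\dot u|^2/r$ on $[0,+\infty)$ (coming from $\dot h=-(\dSpace-1)|\dot u|^2/r$ and the lower boundedness of $h$) combined with the integrating-factor estimate applied on a short interval starting at each $\rho_n$ forces $u$ to exit $B(u_0,\delta)$ in the direction $e$ by a definite amount just after $\rho_n$, contradicting $u(\rho_n)\to u_0$.

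The main obstacle is precisely this last step: because the equation is non-autonomous and dissipative, classical conservation of $h$ is unavailable, so ruling out an asymptotic accumulation of $u(r)$ at $u_0$ requires combining the dissipation estimate with the integrating-factor estimate on $\psi$, rather than relying on either alone; the local monotonicity of $\psi$ established on $[0,r_2]$ is not, by itself, sufficient to cover the global range $r\in[0,+\infty)$ appearing in conclusion \cref{item:values_reached_once}.
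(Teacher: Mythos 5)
Your treatment of conclusion \cref{item:dot_u_does_not_vanish} and of the local injectivity of $u$ near $r=0$ is correct and essentially matches the paper's (the paper derives the expansion $\dot u(r)=\frac{r}{\dSpace}\nabla V(u_0)\bigl(1+o(1)\bigr)$ from the linearization of system \cref{differential_system_autonomous_time_tau} at $(u_0,0_{\rr^{\dState}},0)$ and concludes that $r\mapsto u(r)$ is a diffeomorphism on some $[0,r_0]$; your integrating-factor argument on $\psi=e\cdot(u-u_0)$ achieves the same). The problem is the global step. Reducing to $u_0\notin\overline{u([r_2,+\infty))}$ is legitimate, and your finite-accumulation case is fine, but in the case $\rho_n\to+\infty$ the proposed mechanism does not close: the integrating-factor inequality restarted at $\rho_n$ gives $r^{\dSpace-1}\dot\psi(r)\ge\rho_n^{\dSpace-1}\dot\psi(\rho_n)+\dots$, and nothing you have established controls $\dot\psi(\rho_n)$ pointwise (integrability of $\abs{\dot u}^2/r$ yields only averaged smallness of $\dot u$ over $[\rho_n,2\rho_n]$), so $u$ may a priori drift in the direction $-e$ and leave $B(u_0,\delta)$ before the forcing term takes over. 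As written, this step is a genuine gap.

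The gap is closed by the energy argument that the paper uses for the whole of conclusion \cref{item:values_reached_once}, and which renders the scalar projection unnecessary. Since $\dot u$ does not vanish on $(0,r_0]$ and $\dSpace\ge2$, the dissipation identity \cref{Ham_decrase} gives $h(r_0)<h(0^+)=-V(u_0)$, and $h$ is nonincreasing thereafter; combined with the trivial bound $h(r)\ge -V\bigl(u(r)\bigr)$, this yields $V\bigl(u(r)\bigr)\ge V(u_0)+\varepsilon_0$ with $\varepsilon_0=-h(r_0)-V(u_0)>0$ for every $r\ge r_0$. This settles both of your cases at once (in particular $u(\rho_n)\not\to u_0$), and more directly: choosing $\rOnce$ so small that $V\bigl(u(r^*)\bigr)<V(u_0)+\varepsilon_0$ for all $r^*$ in $[0,\rOnce)$, the equality $u(r)=u(r^*)$ is impossible for $r>r_0$, while for $r\le r_0$ it forces $r=r^*$ by the local injectivity. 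You had all the ingredients ($h$ and \cref{Ham_decrase}); assembling them this way replaces the problematic step by two lines.
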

\begin{proof}
The linearized system \cref{differential_system_autonomous_time_tau} at the equilibrium $(u_0,0_{\rr^{\dState}},0)$ reads:
\[
\frac{d}{d\tau}\begin{pmatrix}\delta u\\ \delta v\\ \delta r \end{pmatrix} = \begin{pmatrix} 0 & 0 & 0 \\ 0 & -(\dSpace-1) & \nabla V(u_0) \\ 0 & 0 & 1 \end{pmatrix} \begin{pmatrix}\delta u\\ \delta v\\ \delta r \end{pmatrix}
\,,
\]
thus the tangent space at $(u_0,0_{\rr^{\dState}},0)$ to $\WuOrigin_V(u_0)$ (the unstable eigenspace of the matrix of this system) is spanned by the vector $\bigl(0,\nabla V(u_0)/\dSpace,1\bigr)$; it follows that
\begin{equation}
\label{estimate_dot_u_close_to_origin}
\dot u(r) = \frac{r}{\dSpace}\nabla V(u_0)\bigl(1+o_{r\to0^+}(r)\bigr)
\,.
\end{equation}
Thus, if $r_0$ is a sufficiently small positive quantity, then $\dot u(\cdot)$ does not vanish on $(0,r_0]$ (so that conclusion \cref{item:dot_u_does_not_vanish} of \cref{lem:dot_u_does_not_vanish_and_values_reached_once} holds provided that $\rOnce$ is not larger than $r_0$), and the map
\begin{equation}
\label{u_of_r_between_0_and_r0}
[0,r_0]\to\rr^{\dState}\,,\quad r\mapsto u(r)
\end{equation}
is a $\ccc^1$-diffeomorphism onto its image. For $r$ in $[0,+\infty)$, let us denote $\bigl(u(r),\dot u(r)\bigr)$ by $U(r)$. According to the decrease \cref{Ham_decrase} of the Hamiltonian, there exists a quantity $\rOnce$ in $(0,r_0)$ such that, for every $r^*$ in $[0,\rOnce)$, 
\begin{equation}
\label{minus_V_at_rStar_larger_than_H_at_r0}
H_V\bigl(U(r_0)\bigr) < - V\bigl(u(r^*)\bigr)
\,.
\end{equation}
Take $r^*$ in $[0,\rOnce]$ and $r$ in $[0,+\infty)$, and let us assume that $u(r)$ equals $u(r^*)$. If $r$ was larger than $r_0$ then it would follow from the expression \cref{Hamiltonian} of the Hamiltonian, its decrease \cref{Ham_decrase}, and inequality \cref{minus_V_at_rStar_larger_than_H_at_r0} that
\[
-V\bigl(u(r)\bigr) \le H_V\bigl(U(r)\bigr) \le H_V\bigl(U(r_0)\bigr) < -V\bigl(u(r^*)\bigr)
\,,
\]
a contradiction with the equality of $u(r)$ and $u(r^*)$. Thus $r$ is not larger than $r_0$, and it follows from the one-to-one property of the function \cref{u_of_r_between_0_and_r0} that $r$ must be equal to $r^*$; conclusion \cref{item:values_reached_once} of \cref{lem:dot_u_does_not_vanish_and_values_reached_once} thus holds, and \cref{lem:dot_u_does_not_vanish_and_values_reached_once} is proved. 
\end{proof}
\subsection{Additional properties close to infinity}
\label{subsec:behaviour_solutions_stable_at_infinity}
Let $V_1$ denote a potential function in $\CkFull$ and $u_{1,\infty}$ denote a nondegenerate minimum point of $V_1$. According to the implicit function theorem, there exists a (small) neighbourhood $\nuRobust(V_1,u_{1,\infty})$ of $\vvvQuad{R}$ and a $\ccc^k$-function $V\mapsto u_\infty(V)$ defined on $\nuRobust(V_1,u_{1,\infty})$ and with values in $\rr^{\dState}$ such that $u_\infty(V_1)$ equals $u_{1,\infty}$ and, for every $V$ in $\nuRobust(V_1,u_{1,\infty})$, $u_\infty(V)$ is a local minimum point of $V$. The following proposition is nothing but the local centre-stable manifold theorem applied to the equilibrium $\bigl(u_\infty(V),0_{\rr^{\dState}},0\bigr)$ of the (autonomous) differential system \cref{differential_system_autonomous_time_r}, for $V$ close to $V_1$. Additional comments and references concerning local stable/centre/unstable manifolds are provided in \mainPaperSubsectionLocStabUnstMancPos{} of \cite{JolyRisler_genericTransversalityTravStandFrontsPulses_2023}. 
\begin{proposition}[local centre-stable manifold at infinity]
\label{prop:loc_centre_stab_manifold_at_infinity}
There exist a neighbourhood $\nu$ of $V_1$ in $\CkFull$, included in $\nuRobust(V_1,u_{1,\infty})$, such that, if $\varepsilon_1$ and $c_1$ denote sufficiently small positive quantities, then, for every $V$ in $\nu$, there exists a $\ccc^k$-map
\begin{equation}
\label{notation_wcsloc_of_V}
\wcslocInftyPlus{V}:\widebar{B}_{\rr^{\dState}}(u_{1,\infty},\varepsilon_1)\times[0,c_1]\to\rr^{\dState}
\,,\quad
(u,c)\mapsto \wcslocInftyPlus{V}(u,c)
\,,
\end{equation}
such that, for every $(u_0,v_0,c_0)$ in $\widebar{B}_{\rr^{\dState}}(u_{1,\infty},\varepsilon_1)\times\rr^{\dState}\times[0,c_1]$, the following two statements are equivalent:
\begin{enumerate}
\item $v = \wcslocInftyPlus{V}(u,c)$;
\item the solution $r\mapsto\bigl(u(r),v(r),c(r)\bigr)$ of the differential system \cref{differential_system_autonomous_time_r} with initial condition $(u_0,v_0,c_0)$ at time $r_0=1/c_0$ is defined up to $+\infty$, remains in $\widebar{B}_{\rr^{\dState}}(u_{1,\infty},\varepsilon_1)\times\rr^{\dState}\times[0,c_1]$ of all $r$ larger than $r_0$, and goes to $\bigl(u_\infty(V),0_{\rr^{\dState}},0\bigr)$ as $r$ goes to $+\infty$.
\end{enumerate}
In particular, $\wcslocInftyPlus{V}\bigl(u_\infty(V),0\bigr)$ is equal to $0_{\rr^{\dState}}$. In addition, the map
\[
\widebar{B}_{\rr^{\dState}}(u_{1,\infty},\varepsilon_1)\times[0,c_1]\times\nu\to\rr^{\dState}\,,\quad
(u,c,V)\mapsto \wcslocInftyPlus{V}(u,c)
\]
is of class $\ccc^k$ (with respect to $u$ and $c$ \emph{and} $V$), and, for every $V$ in $\nu$, the graph of the differential at $\bigl(u_\infty(V),0)$ of the map $(u,c)\mapsto \wcslocInftyPlus{V}(u,c)$ is equal to the centre-stable subspace of the linearization at $\bigl(u_\infty(V),0_{\rr^{\dState}},0\bigr)$ of the differential system \cref{differential_system_autonomous_time_r}. 
\end{proposition}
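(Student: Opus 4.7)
The plan is to invoke the local centre–stable manifold theorem for the parameter-dependent autonomous differential system \cref{differential_system_autonomous_time_r} at the equilibrium $\bigl(u_\infty(V),0_{\rr^{\dState}},0\bigr)$, with $V$ treated as a $\ccc^k$ parameter in a small neighbourhood of $V_1$. The proof then splits into three natural steps: verify the spectral hypotheses of this theorem, derive the graph representation by a linear-algebra projection argument, and transfer the $\ccc^k$ dependence on $V$ via the standard parametric version of the statement.

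First I would linearize system \cref{differential_system_autonomous_time_r} at $\bigl(u_\infty(V),0_{\rr^{\dState}},0\bigr)$. Since both terms $-(\dSpace-1)cv$ and $-c^2$ vanish quadratically in $(v,c)$ at the equilibrium, the Jacobian takes the block form
\[
M_V = \begin{pmatrix} 0 & I_{\dState} & 0 \\ D^2V\bigl(u_\infty(V)\bigr) & 0 & 0 \\ 0 & 0 & 0 \end{pmatrix}
\,.
\]
Because $u_{1,\infty}$ is a nondegenerate minimum of $V_1$, the eigenvalues $\lambda_i(V)$ of $D^2V\bigl(u_\infty(V)\bigr)$ remain strictly positive for $V$ in a sufficiently small neighbourhood $\nu\subset\nuRobust(V_1,u_{1,\infty})$ of $V_1$; hence $M_V$ has $\dState$ stable real eigenvalues $-\sqrt{\lambda_i(V)}$, $\dState$ unstable real eigenvalues $+\sqrt{\lambda_i(V)}$, and exactly one centre eigenvalue $0$ attached to the $c$-direction. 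The centre–stable eigenspace is therefore $(\dState+1)$-dimensional and depends smoothly on $V$, uniformly on $\nu$.

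Next, the parametric local centre–stable manifold theorem (see the references cited in \mainPaperSubsectionLocStabUnstMancPos{} of \cite{JolyRisler_genericTransversalityTravStandFrontsPulses_2023}) provides a $(\dState+1)$-dimensional local invariant manifold $W^{cs}_{\mathrm{loc}}(V)$, of class $\ccc^k$ jointly in the state variables and the parameter $V$, tangent at the equilibrium to the centre–stable eigenspace of $M_V$. To write this manifold as the graph of a map $\wcslocInftyPlus{V}:(u,c)\mapsto v$, I would observe that the centre–stable subspace is spanned by the vectors $\bigl(e_i,-\sqrt{\lambda_i(V)}\,e_i,0\bigr)$ — with $e_i$ an eigenvector of $D^2V\bigl(u_\infty(V)\bigr)$ for $\lambda_i(V)$ — together with the centre vector $(0,0,1)$; its projection onto the $(u,c)$-subspace is then a linear isomorphism onto $\rr^\dState\times\rr$. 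An application of the implicit function theorem yields, after shrinking $\varepsilon_1$, $c_1$, and $\nu$ if needed, a $\ccc^k$ function $\wcslocInftyPlus{V}$ defined on $\widebar{B}_{\rr^{\dState}}(u_{1,\infty},\varepsilon_1)\times[0,c_1]$ whose graph coincides with $W^{cs}_{\mathrm{loc}}(V)$ and whose differential at $\bigl(u_\infty(V),0\bigr)$ has exactly the announced centre–stable subspace as graph.

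The equivalence between the graph condition $v=\wcslocInftyPlus{V}(u,c)$ and the dynamical characterization is then the usual content of the local centre–stable manifold theorem: any forward orbit that remains in a sufficiently small neighbourhood of the equilibrium must lie on $W^{cs}_{\mathrm{loc}}(V)$, and conversely, on this manifold the scalar equation $c_r=-c^2$ drives $c(r)$ monotonically to $0$, after which the stable directions force $(u(r),v(r))$ to $\bigl(u_\infty(V),0_{\rr^{\dState}}\bigr)$. The main point where a genuine obstacle could arise is the presence of the non-hyperbolic centre direction — centre manifolds are in general neither unique nor $\ccc^k$ to arbitrary order. Here this is harmless because the $c$-equation is explicitly integrable and decoupled from $(u,v)$, so the centre direction is entirely governed by the scalar flow $c_r=-c^2$; the construction is essentially a stable-manifold argument on each slice of fixed small $c>0$ together with a clean limit at $c=0$, and hence yields both the graph representation and full $\ccc^k$ regularity with no loss.
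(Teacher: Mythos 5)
Your proposal is correct and follows exactly the route the paper takes: the paper offers no separate proof, stating only that the proposition ``is nothing but the local centre-stable manifold theorem'' applied to the equilibrium $\bigl(u_\infty(V),0_{\rr^{\dState}},0\bigr)$ of system \cref{differential_system_autonomous_time_r}, and it resolves the non-uniqueness of the centre direction by the same observation you make, namely that $c_r=-c^2$ drives $c$ monotonically to $0$. Your spectral computation of the Jacobian, the graph representation over $(u,c)$, and the parametric $\ccc^k$ dependence on $V$ are all correct fleshings-out of that one-line argument.
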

Let us denote by $\WcslocInftyPlus{V}{\varepsilon_1}{c_1}\bigl(u_\infty(V)\bigr)$ the graph of the map \cref{notation_wcsloc_of_V} (restricted to positive values of $c$), see \cref{fig:local_centre_stable_man_at_infinity}; with symbols, 
\begin{equation}
\label{def_WcslocPlus}
\WcslocInftyPlus{V}{\varepsilon_1}{c_1}\bigl(u_\infty(V)\bigr) = \Bigl\{\bigl(u,\wcslocInftyPlus{V}(u,c),c\bigr):(u,c)\in\widebar{B}_{\rr^{\dState}}(u_{1,\infty},\varepsilon_1)\times(0,c_1]\Bigr\}
\,.
\end{equation}
This set defines a local centre-manifold (restricted to positive values of $c$) for the equilibrium $\bigl(u_\infty(V),0_{\rr^{\dState}},0\bigr)$ of the differential system \cref{differential_system_autonomous_time_r}. Its uniqueness (for positive values of $c$) is ensured by the dynamics of the centre component $c$, which, according to the third equation of system \cref{differential_system_autonomous_time_r}, decreases to $0$ (see \cref{fig:local_centre_stable_man_at_infinity}).
The global centre-stable manifold $\WcsInftyPlus_V\bigl(u_\infty(V)\bigr)$ already defined in \cref{def_WcsInftyPlus_of_uInfty} can be redefined as the points of $\rr^{2\dState}\times(0,+\infty)$ that eventually reach the local centre manifold $\WcslocInftyPlus{V}{\varepsilon_1}{c_1}\bigl(u_\infty(V)\bigr)$ when they are transported by the flow of the differential system \cref{differential_system_autonomous_time_r}. 
\begin{remark}
If the state dimension $\dState$ is equal to $1$, then a calculation shows that
\[
\wcslocInftyPlus{V}(u,c) = -\bigl(u-u_\infty(V)\bigr)\left(\sqrt{V''\bigl(u_\infty(V)\bigr)} + \frac{\dSpace-1}{2}c+\dots\right)
\,,
\]
where ``$\dots$'' stands for higher order terms in $u-u_\infty(V)$ and $c$. In particular the quantity $\partial_c\partial_u\wcslocInftyPlus{V}\bigl(u_\infty(V),0\bigr)$ is equal to the (negative) quantity $-(\dSpace-1)/2$. The display of the local centre-stable manifold at infinity on \cref{fig:local_centre_stable_man_at_infinity} fits with the sign of this quantity. 
\end{remark}
\section{Tools for genericity}
Let 
\begin{equation}
\label{notation_vvvFull}
\vvvFull = \CkFull 
\,,
\end{equation}
and, for a positive quantity $R$, let
\begin{equation}
\label{notation_vvvQuad_of_R}
\vvvQuad{R} = \left\{ V\in\vvvFull : \text{for all $u$ in $\rr^d$}, \ \abs{u}\ge R \implies V(u) = \frac{u^2}{2} \right\} 
\,.
\end{equation}
Let us recall the notation $\ssss_V$ introduced in \cref{notation_sss_V}.
\begin{lemma}
\label{lem:global_flow_and_a_priori_bound_on_profiles}
For every positive quantity $R$ and for every potential $V$ in $\vvvQuad{R}$, the following conclusions hold. 
\begin{enumerate}
\item The flow defined by the differential system \cref{differential_system_radially_symmetric_stationary_order_2} (governing radially symmetric stationary solutions of the parabolic system \cref{parabolic_system_higher_space_dimension}) is global (that is, every solution is defined on $(0,+\infty)$).
\label{item:lem_global_flow}
\item For every $u$ in $\ssss_V$, the following bound holds:
\begin{equation}
\label{a_priori_bound_on_profiles}
\sup_{r\in(0,+\infty)} \abs{u(r)} < R
\,.
\end{equation}
\label{item:lem_a_priori_bound_on_profiles}
\end{enumerate}
\end{lemma}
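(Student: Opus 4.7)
The two conclusions will be treated separately.

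For conclusion \cref{item:lem_global_flow}, the crucial observation is that $\nabla V$ is at most linearly growing on $\rr^\dState$: since $V(u)=\abs{u}^2/2$ past $R$, one has $\abs{\nabla V(u)}\le \abs{u}+M$ where $M=\sup_{\abs{u}\le R}\abs{\nabla V(u)}$. On any compact subinterval $[r_1,r_2]\subset(0,+\infty)$, the coefficient $(\dSpace-1)/r$ is bounded, so a routine Gronwall estimate applied to $\abs{u(r)}^2+\abs{\dot u(r)}^2$ shows that solutions cannot blow up on $[r_1,r_2]$; standard continuation arguments then yield global existence of every maximal solution on $(0,+\infty)$.

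For conclusion \cref{item:lem_a_priori_bound_on_profiles}, I would fix $u\in\ssss_V$ with limit $u_\infty$ at infinity, and first note that $\abs{u_\infty}<R$: indeed, the only possible zero of $\nabla V$ on $\{\abs{u}\ge R\}$ would be the origin, which does not lie in that set. The continuous function $\phi:r\mapsto\abs{u(r)}^2$ therefore converges to $\abs{u_\infty}^2<R^2$ at infinity, so that if $\sup\phi\ge R^2$ then this supremum is attained at some $\bar r\in[0,+\infty)$ with $\phi(\bar r)\ge R^2$.

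Arguing by contradiction from $\phi(\bar r)\ge R^2$, I would exploit that $u(\bar r)$ lies in the quadratic region of $V$, where $\nabla V(u(\bar r))=u(\bar r)$. Differentiating $\phi$ twice and substituting the ODE \cref{differential_system_radially_symmetric_stationary_order_2} yields, at any $r$ with $\abs{u(r)}\ge R$,
\[
\ddot\phi(r)=2\abs{\dot u(r)}^2-\frac{2(\dSpace-1)}{r}\,u(r)\cdot\dot u(r)+2\abs{u(r)}^2
\,.
\]
If $\bar r>0$, the interior maximum condition $\dot\phi(\bar r)=0$ forces $u(\bar r)\cdot\dot u(\bar r)=0$, hence $\ddot\phi(\bar r)\ge 2R^2>0$, contradicting the necessary condition $\ddot\phi(\bar r)\le 0$. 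If instead $\bar r=0$, then $\dot u(0)=0$ and the ODE at $r=0$ (obtained via L'H\^opital from $\dot u(r)/r\to\ddot u(0)$) gives $\ddot u(0)=\nabla V(u(0))/\dSpace=u(0)/\dSpace$, hence $\ddot\phi(0)=2\abs{u(0)}^2/\dSpace>0$; Taylor expansion then forces $\phi(r)>\phi(0)$ for small $r>0$, contradicting maximality at $0$. The main subtlety is handling the singular boundary $r=0$, which is resolved by the $\ccc^3$ even extension property recorded below \cref{boundary_condition_at_r_equals_0_order_2}.
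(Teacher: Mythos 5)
Your proof is correct. For conclusion \cref{item:lem_global_flow} it coincides with the paper's argument: the quadratic behaviour of $V$ outside $B_{\rr^{\dState}}(0,R)$ gives the linear growth bound $\abs{\nabla V(u)}\le \abs{u}+M$, and a Gronwall estimate on compact subintervals of $(0,+\infty)$, where $(\dSpace-1)/r$ is bounded, rules out blow-up in both directions.

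For conclusion \cref{item:lem_a_priori_bound_on_profiles} the two proofs rest on the same differential identity --- writing $q=\abs{u}^2/2$, one has $\ddot q=\abs{\dot u}^2-\frac{\dSpace-1}{r}\dot q+u\cdot\nabla V(u)$, with $u\cdot\nabla V(u)$ positive where $\abs{u}\ge R$ --- but they exploit it differently. The paper integrates it: it introduces the weighted quantity $Q(r)=r^{\dSpace-1}\dot q(r)$, shows $\dot Q>0$ throughout any excursion outside $B_{\rr^{\dState}}(0,R-\delta)$ using the inequality $w\cdot\nabla V(w)\ge w^2/2$ for $\abs{w}\ge R-\delta$, and obtains a contradiction between the first exit radius $\rOut$ from the ball of radius $R$ and the first return radius $\rBack$ into the ball of radius $R-\delta$. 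You instead apply a pointwise second-derivative test at the radius $\bar r$ where $\phi=\abs{u}^2$ attains its maximum (attained because $u$ is continuous on $[0,+\infty)$ and tends to $u_\infty$ with $\abs{u_\infty}<R$): at an interior maximum the first-order condition $u(\bar r)\cdot\dot u(\bar r)=0$ kills the dissipative cross term and leaves $\ddot\phi(\bar r)\ge 2R^2>0$, while at $\bar r=0$ the $\ccc^2$ even extension and the relation $\ddot u(0)=\nabla V\bigl(u(0)\bigr)/\dSpace$ give $\ddot\phi(0)=2\abs{u(0)}^2/\dSpace>0$. Your version is a little more economical --- no margin $\delta$, no auxiliary radii, and only the exact identity $\nabla V(u)=u$ at the single maximising point is needed --- whereas the paper's monotonicity of $Q$ uses only the one-sided inequality $w\cdot\nabla V(w)\ge w^2/2$ near the sphere of radius $R$ and reuses a quantity ($r^{\dSpace-1}\dot q$) that reappears in the proof of \cref{prop:transversality_of_homogeneous_solutions}. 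Both treatments must, and do, handle the singular endpoint $r=0$ as a separate case.
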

\begin{proof}
Let $V$ be in $\vvvQuad{R}$. According to the definition \cref{notation_vvvQuad_of_R} of $\vvvQuad{R}$, there exists a positive quantity $K$ such that, for every $u$ in $\rr^{\dState}$, 
\[
\abs{\nabla V(u)}\le K + \abs{u}
\,.
\]
As a consequence, the following inequalities hold for the right-hand side of the first order differential system \cref{differential_system_radially_symmetric_stationary_order_1}:
\[
\abs{\left(v,-\frac{\dSpace-1}{r}v+\nabla V(u)\right)} 
\le \abs{v} + \frac{\dSpace-1}{r} \abs{v} + K + \abs{u}
\le K + \left(2+\frac{\dSpace-1}{r}\right)\abs{(u,v)}
\,,
\]
and this bound prevents the solution from blowing up in finite time, which proves conclusion \cref{item:lem_global_flow}.

Now, take a function $u$ in $\ssss_V$. Let us still denote by $u(\cdot)$ the continuous extension of this solution to $[0,+\infty)$. For every $r$ in $[0,+\infty)$, let 
\[
q(r) = \frac{u(r)^2}{2}
\quad\text{and}\quad
Q(r) = r^{\dSpace-1}\dot q(r)
\,.
\]
Then (omitting the dependency on $r$), 
\[
\dot q = u\cdot\dot u 
\quad\text{and}\quad
\ddot q = \dot u^2 + u\cdot\ddot u = \dot u^2 - \frac{\dSpace-1}{r} \dot q + u\cdot \nabla V(u)
\,,
\]
so that
\[
\dot Q = r^{\dSpace-1}\left(\ddot q + \frac{\dSpace-1}{r} \dot q \right) = r^{\dSpace-1}\bigl(\dot u^2+ u\cdot \nabla V(u)\bigr)
\,.
\]
According to the definition \cref{notation_vvvQuad_of_R} of $\vvvQuad{R}$, there exists a positive quantity $\delta$ (sufficiently small) so that, for every $w$ in $\rr^{\dState}$,
\begin{equation}
\label{lower_bound_w_cdot_nabla_V_of_w}
\abs{w}\ge R-\delta \implies w\cdot \nabla V(w)\ge \frac{w^2}{2}
\,.
\end{equation}
Let us proceed by contradiction and assume that $\sup_{r\in(0,+\infty)}\abs{u(r)}$ is not smaller than $R$. Since $u(\cdot)$ is stable at infinity and since the critical points of $V$ belong to the open ball $B_{\rr^{\dState}}(0,R-\delta)$, it follows that the set
\[
\bigl\{r\in[0,+\infty): \abs{u(r)}\ge R\bigr\} 
\]
is nonempty; let $\rOut$ denote the minimum of this set. For the same reason, the set
\[
\bigl\{r\in(\rOut,+\infty):\abs{u(r)} < R-\delta\bigr\}
\]
is also nonempty. Let $\rBack$ denote the infimum of this last set. It follows from these definitions that $\rBack$ is larger than $\rOut$ and that, for every $r$ in $(\rOut,\rBack)$, according to inequality \cref{lower_bound_w_cdot_nabla_V_of_w},
\begin{equation}
\label{dot_P_positive_between_r0_and_r1}
\dot Q(r) \ge r^{\dSpace-1}\left(\dot u^2(r) + \frac{u^2(r)}{2}\right) >0
\,.
\end{equation}
If on the one hand $\rOut$ equals $0$ then $\abs{u(0)}$ is not smaller than $R$ and, since $Q(0)$ equals $0$, it follows from inequality \cref{dot_P_positive_between_r0_and_r1} that $Q(\cdot)$ is positive on $(0,\rBack)$, so that the same is true for $\dot q(\cdot)$. Thus $q(\cdot)$ is strictly increasing on $[0,\rBack]$ and $\abs{u(\rBack)}$ must be larger than $\abs{u(\rOut)}$, a contradiction with the definition of $\rBack$. If on the other hand $\rOut$ is positive, then $\abs{u(\rOut)}$ is equal to $R$ and $\dot q(\rOut)$ is nonnegative so that the same is true for $Q(\rOut)$, and it again follows from inequality \cref{dot_P_positive_between_r0_and_r1} that $Q(\cdot)$ is positive on $(0,\rBack)$, yielding the same contradiction. Conclusion \cref{item:lem_a_priori_bound_on_profiles} of \cref{lem:global_flow_and_a_priori_bound_on_profiles} is proved. 
\end{proof}
\begin{notation}
For every positive quantity $R$ and every potential $V$ in $\vvvQuad{R}$, let 
\begin{equation}
\label{flow}
S_V:(0,+\infty)^2\times\rr^{2\dState}\to\rr^{2\dState}
\,,\quad
\bigl((\rInit,r),(\uInit,\vInit)\bigr)\mapsto S_V\bigl((\rInit,r),(\uInit,\vInit)\bigr)
\end{equation}
denote the (globally defined) flow of the (non-autonomous) differential system \cref{differential_system_radially_symmetric_stationary_order_1} for this potential $V$. In other words, for every $\rInit$ in $(0,+\infty)$ and $(\uInit,\vInit)$ in $\rr^{2\dState}$, the function 
\[
(0,+\infty)\to\rr^{2\dState}\,,\quad r\mapsto S_V\bigl((\rInit,r_1),(\uInit,\vInit)\bigr)
\]
is the solution of the differential system \cref{differential_system_radially_symmetric_stationary_order_1} for the initial condition $(\uInit,\vInit)$ at $r$ equals $\rInit$. According to \cref{subsec:differential_systems_gov_rad_symm_stat_sol}, the flow $S_V$ may be extended to the larger set
\[
(0,+\infty)^2\times\rr^{2\dState} \cup [0,+\infty)^2\times\rr^{\dState}\times\{0_{\rr^{\dState}}\}
\,;
\]
according to this extension, for every $u_0$ in $\rr^{\dState}$, the solution taking its values in the (one-dimensional) unstable manifold $\WuOriginPlus_V(u_0)$ reads:
\begin{equation}
\label{flow_unstable_manifold_origin}
[0,+\infty)\to\rr^{\dState}
\,,\quad
r\mapsto S_V\bigl((0,r),(u_0,0_{\rr^{\dState}})\bigr)
\,.
\end{equation}
\end{notation}
\section{Generic transversality among potentials that are quadratic past a given radius}
\label{sec:generic_transversality_potentials_quadratic_past_given_radius}
\subsection{Notation and statement}
Let us recall the notation $\ssss_V$ and $\ssss_{\VuInfty}$ introduced in \cref{notation_sss_V}.
\begin{proposition}
\label{prop:generic_transversality_potentials_quadratic_past_given_radius}
There exists a generic subset of $\vvvQuad{R}$ such that, for every potential $V$ in this subset, every radially symmetric stationary solution stable at infinity of the parabolic system \cref{parabolic_system_higher_space_dimension} (in other words, every $u$ in $\ssss_V$) is transverse. 
\end{proposition}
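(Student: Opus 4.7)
The plan is to apply the Sard--Smale transversality theorem along the lines of \cite{JolyRisler_genericTransversalityTravStandFrontsPulses_2023}, following the ``dictionary'' of \cref{table:comparison} between the one-dimensional and higher-dimensional cases. The proof splits naturally into three parts: the construction of a parametrization map $\Phi$ whose preimage of a diagonal submanifold encodes the profiles in $\ssss_V$; a perturbation lemma showing that $\Phi$ is transverse to this diagonal when $V$ is varied; and a final countable covering argument.

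\textbf{Construction of $\Phi$.} Fix a reference potential $V_1\in\vvvQuad{R}$, a nondegenerate minimum point $u_{1,\infty}$ of $V_1$, the neighbourhood $\nu\subset\vvvQuad{R}$ and small constants $\varepsilon_1,c_1$ provided by \cref{prop:loc_centre_stab_manifold_at_infinity}, and a radius $N$ with $N>R$ and $1/N<c_1$. By \cref{lem:global_flow_and_a_priori_bound_on_profiles}, every element of $\ssss_V$ stays inside $B_{\rr^{\dState}}(0,R)$, so it suffices to search for profiles with departure points in $\widebar{B}_{\rr^{\dState}}(0,R)$. Let
\[
\mmm = \widebar{B}_{\rr^{\dState}}(0,R)\times\widebar{B}_{\rr^{\dState}}(u_{1,\infty},\varepsilon_1),\qquad \nnn = \rr^{2\dState}\times\rr^{2\dState},
\]
let $\www\subset\nnn$ be the diagonal (of dimension and codimension $2\dState$), and set
\[
\Phi : \nu\times\mmm \to \nnn,\quad
\Phi(V,u_0,u_1) = \Bigl(\, S_V\bigl((0,N),(u_0,0_{\rr^{\dState}})\bigr),\, \bigl(u_1,\wcslocInftyPlus{V}(u_1,1/N)\bigr)\, \Bigr).
\]
By \cref{lem:equivalence_rad_sym_stat_stable_at_infty_intersection_Wu_Ws} together with the characterisation of the global centre-stable manifold as the set of points that eventually enter $\WcslocInftyPlus{V}{\varepsilon_1}{c_1}\bigl(u_\infty(V)\bigr)$, the preimage $\Phi_V^{-1}(\www)$ is in bijection with $\ssss_{V,u_\infty(V)}$, and transversality in the sense of \cref{def:transversality_rad_symm_solution_stable_at_infinity} is equivalent to $\Phi_V\pitchfork\www$ at the corresponding point. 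Since $\dim\mmm=\codim\www=2\dState$, this amounts to injectivity of $D\Phi_V(u_0,u_1)$ composed with the quotient projection $T\nnn\to T\nnn/T\www$.

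\textbf{Perturbation lemma.} The technical core of the proof is to show that for every $V\in\nu$, every non-constant profile corresponding to a pair $(u_0,u_1)$ with $\Phi(V,u_0,u_1)\in\www$, and every nonzero $(\phi,\psi)\in\rr^{2\dState}$ representing a functional on $T_{\Phi(V,u_0,u_1)}\nnn/T\www$, there exists an admissible perturbation direction $W\in\vvvQuad{R}-V$ (equivalently, $W\in\ccc^{k+1}$ bounded with support in $\widebar{B}_{\rr^{\dState}}(0,R)$) such that
\[
\bigl\langle D_V\Phi(V,u_0,u_1)\cdot W \,\big|\, (\phi,\psi)\bigr\rangle \neq 0.
\]
Variation-of-constants applied to the linearisation of \cref{differential_system_radially_symmetric_stationary_order_1} along $u(\cdot)$ expresses the flow contribution as an integral
\[
\int_0^N r^{\dSpace-1}\,\beta(r)\cdot\nabla W\bigl(u(r)\bigr)\,dr
\]
for an adjoint vector field $\beta(\cdot)$ built from $\phi$, plus a contribution coming from the $V$-dependence of $\wcslocInftyPlus{V}(u_1,1/N)$ paired with $\psi$. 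The homogeneous case is excluded by \cref{prop:transversality_of_homogeneous_solutions}, so $u(\cdot)$ is non-constant and \cref{lem:dot_u_does_not_vanish_and_values_reached_once} provides an interval $\IOnce=[0,\rOnce)$ on which $r\mapsto u(r)$ is injective and $\dot u(r)$ does not vanish. A bump $W$ localised in a narrow tube around $u(\IOnce)$ with $\nabla W$ prescribed transverse to $\dot u$ along this injective arc then produces a nonzero integral as soon as $\beta$ has any component off the line spanned by $\dot u$ on $\IOnce$.

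\textbf{Main obstacle and conclusion.} The delicate subcase (``case 3'' in \cref{table:comparison}) is when $\beta$ is collinear to $\dot u$ throughout $\IOnce$, so that no perturbation supported along the interior of the injective arc contributes. Unlike the standing pulse setting of \cite{JolyRisler_genericTransversalityTravStandFrontsPulses_2023} where this case was precluded by contradiction, here it may genuinely occur and must be handled directly: following the last row of \cref{table:comparison}, one chooses $W$ with $W(u_0)\neq 0$ supported in a small ball around $u_0$ disjoint from $u\bigl((\rOnce/2,N]\bigr)\cup\{u_\infty(V)\}$, so that the leading-order expansion \cref{estimate_dot_u_close_to_origin} shows the perturbation reorients the tangent to $\WuOriginPlus_V(u_0)$ at $\sssSym$ and supplies the missing degree of freedom. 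Making this step fully rigorous is the main expected obstacle. Granting the perturbation lemma, $\Phi$ is transverse to $\www$ as a function of $(V,u_0,u_1)$, and Sard--Smale produces a residual subset $\gggg(V_1,u_{1,\infty},N)\subset\nu\cap\vvvQuad{R}$ on which $\Phi_V\pitchfork\www$, hence every non-constant profile in $\ssss_V$ whose asymptotic minimum lies in $\widebar{B}_{\rr^{\dState}}(u_{1,\infty},\varepsilon_1)$ is transverse. A countable family of triples $(V_{1,n},u_{1,\infty,n},N_n)$ whose neighbourhoods cover $\vvvQuad{R}$ with $N_n\to+\infty$, combined with the unconditional \cref{prop:transversality_of_homogeneous_solutions}, yields by countable intersection the residual subset of $\vvvQuad{R}$ asserted by \cref{prop:generic_transversality_potentials_quadratic_past_given_radius}.
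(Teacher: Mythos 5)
Your construction is essentially the paper's: the same map $\Phi$ combining the flow of the unstable manifold of $\sssSym$ up to radius $N$ with the graph of the local centre-stable manifold at $c=1/N$, the same diagonal target $\www$, the same dimension count $\dim\mmm-\codim\www=0$, the same case analysis for the perturbation lemma based on the interval $\IOnce$ of \cref{lem:dot_u_does_not_vanish_and_values_reached_once}, and the same countable intersections over $N$ and over a countable cover of $\vvvQuad{R}$ (the paper organises this last step slightly differently, via a reduction to a local statement per minimum point of a Morse potential followed by the local-to-global genericity lemma of \cite{JolyRisler_genericTransversalityTravStandFrontsPulses_2023}, but that is cosmetic).

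The one genuine gap is exactly where you flag it: case 3, where the adjoint component $\psi$ equals a \emph{constant} multiple $\alpha\,\dot u_2$ on $\IOnce$. Your proposed mechanism --- that a bump $W$ near $u_0$ ``reorients the tangent to $\WuOriginPlus_V(u_0)$ at $\sssSym$'' via \cref{estimate_dot_u_close_to_origin} --- points at the wrong quantity: the tangent of the unstable manifold at the equilibrium is spanned by $\bigl(0,\nabla(V+W)(u_0)/\dSpace,1\bigr)$, so that heuristic would make the relevant datum $\nabla W(u_0)$ rather than $W(u_0)$, and in any case the perturbation enters $D\Phiu\cdot(0,0,W)$ only through the variation-of-constants integral, not through a separate geometric effect. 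The correct (and much simpler) resolution is a direct computation: $\alpha$ cannot vanish (otherwise $\phi$ and $\psi$ would vanish identically by \cref{adjoint_linearized_system}), so one may normalise $\alpha=1$; then for $W$ supported in a small neighbourhood of $u_{2,0}$, hence vanishing on $u_2\bigl([\rOnce,N]\bigr)$, the integral \cref{reach_all_directions_condition_with_psi} telescopes,
\[
\int_0^{\rOnce}\nabla W\bigl(u_2(r)\bigr)\cdot\dot u_2(r)\,dr
= W\bigl(u_2(\rOnce)\bigr)-W(u_{2,0}) = -W(u_{2,0})\,,
\]
which is nonzero as soon as $W(u_{2,0})\neq0$ --- precisely the condition recorded in the last row of \cref{table:comparison}. (Note also that if your adjoint pairing genuinely carries the weight $r^{\dSpace-1}$, the integrand above is no longer an exact derivative; with the paper's normalisation of $T^*(r,N)$ there is no such weight and the telescoping is immediate.) With this computation in place, the rest of your argument goes through.
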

\subsection{Reduction to a local statement}
Let $V_1$ denote a potential function in $\vvvQuad{R}$ and $u_{1,\infty}$ denote a nondegenerate minimum point of $V_1$. According to the implicit function theorem, there exists a (small) neighbourhood $\nuRobust(V_1,u_{1,\infty})$ of $\vvvQuad{R}$ and a $\ccc^k$-function $u_\infty(\cdot)$ defined on $\nuRobust(V_1,u_{1,\infty})$ and with values in $\rr^{\dState}$ such that $u_\infty(V_1)$ equals $u_{1,\infty}$ and, for every $V$ in $\nuRobust(V_1,u_{1,\infty})$, $u_\infty(V)$ is a local minimum point of $V$. The following local generic transversality statement yields \cref{prop:generic_transversality_potentials_quadratic_past_given_radius} (as shown below). 
\begin{proposition}
\label{prop:local_generic_transversality_potentials_quadratic_past_given_radius}
There exists a neighbourhood $\nu_{\VOneuOneInfty}$ of $V_1$ in $\nuRobust(\VOneuOneInfty)$ and a generic subset $\nu_{\VOneuOneInftyGen}$ of $\nu_{\VOneuOneInfty}$ such that, for every $V$ in $\nu_{\VOneuOneInftyGen}$, every radially symmetric stationary solution stable close to $u_\infty(V)$ at infinity of the parabolic system \cref{parabolic_system_higher_space_dimension} (in other words, every $u$ in $\ssss_{\VuInftyOfV}$) is transverse.
\end{proposition}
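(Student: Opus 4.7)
The strategy is to apply the Sard--Smale theorem to a $\ccc^k$-map $\Phi$ combining the parametrizations of the unstable set $\WuOriginPlus_V$ and of the local centre-stable manifold $\WcslocInftyPlus{V}{\varepsilon_1}{c_1}\bigl(u_\infty(V)\bigr)$ at some fixed large radius $r = N$, following the general scheme of the companion paper \cite{JolyRisler_genericTransversalityTravStandFrontsPulses_2023}.

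After shrinking $\nuRobust(V_1,u_{1,\infty})$ so that \cref{prop:loc_centre_stab_manifold_at_infinity} applies uniformly on a neighbourhood $\nu_{\VOneuOneInfty}$ of $V_1$, the a priori bound of \cref{lem:global_flow_and_a_priori_bound_on_profiles} confines the initial data $u_0$ of any $u \in \ssss_V$ to the open ball $B_{\rr^{\dState}}(0, R)$. The homogeneous case is already settled by \cref{prop:transversality_of_homogeneous_solutions}, so I may focus on non-constant profiles. Choosing $N$ large enough so that $1/N \le c_1$, I would set
\[
\mmm = B_{\rr^{\dState}}(0, R) \times \widebar{B}_{\rr^{\dState}}(u_{1,\infty}, \varepsilon_1) \,, \quad \nnn = \rr^{2\dState} \times \rr^{2\dState} \,, \quad \www = \bigl\{(X, X) : X \in \rr^{2\dState}\bigr\} \,,
\]
so that $\dim \mmm = 2\dState = \codim_\nnn \www$, and define $\Phi : \mmm \times \nu_{\VOneuOneInfty} \to \nnn$ by
\[
\Phi(u_0, u_N, V) = \Bigl( S_V\bigl((0, N), (u_0, 0_{\rr^{\dState}})\bigr) \,,\, \bigl(u_N, \wcslocInftyPlus{V}(u_N, 1/N)\bigr) \Bigr) \,.
\]
By \cref{lem:equivalence_rad_sym_stat_stable_at_infty_intersection_Wu_Ws}, together with the uniqueness of the centre-stable manifold at positive $c$, the non-constant elements of $\ssss_{\VuInftyOfV}$ correspond bijectively to $\Phi_V^{-1}(\www)$, and transversality of $\Phi_V$ to $\www$ at such a preimage translates exactly into the transversality property of \cref{def:transversality_rad_symm_solution_stable_at_infinity}.

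The central step is to verify the hypothesis of Sard--Smale, namely that $\Phi$ itself is transverse to $\www$ at every point of $\Phi^{-1}(\www)$. Given such a point, corresponding to a non-constant profile $u(\cdot)$, and given $(\phi, \psi) \in \rr^{2\dState} \times \rr^{2\dState}$ representing a direction transverse to $\www$, the task is to produce a perturbation $W \in \CkbFull$ whose $V$-variation
\[
\bigl\langle D\Phi(u_0, u_N, V) \bigm| (0, 0, W) \bigr\rangle
\]
hits $(\phi, \psi)$ modulo the image of $D_{(u_0, u_N)} \Phi$ and modulo $T\www$. A standard variation-of-constants computation along $u$ recasts this as an adjoint/duality condition, and the key analytic input is \cref{lem:dot_u_does_not_vanish_and_values_reached_once}: any perturbation $W$ supported in a small ball around $u(r^*)$ with $r^* \in \IOnce$ modifies a single arc of the trajectory, and, being supported away from $u_\infty(V)$, leaves $\wcslocInftyPlus{V}$ unchanged.

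The main obstacle is the adjoint argument showing that no nonzero linear form on $\nnn$, annihilating both $T\www$ and $\mathrm{Im}\, D_{(u_0, u_N)} \Phi$, can vanish simultaneously on every such $V$-variation. Mirroring the trichotomy carried out in \cite{JolyRisler_genericTransversalityTravStandFrontsPulses_2023}, I expect that in two of the three cases the vanishing on an open set of values propagates by unique continuation to a contradiction, while the ``case 3'' of \cref{table:comparison} --- peculiar to the radially symmetric setting --- is resolved by allowing $W(u_0) \neq 0$, rather than being shown impossible as in the standing pulse setting. Once this is complete, Sard--Smale produces the desired generic subset $\nu_{\VOneuOneInftyGen} \subset \nu_{\VOneuOneInfty}$ for which $\Phi_V \pitchfork \www$; combined with \cref{prop:transversality_of_homogeneous_solutions}, this completes the proof of \cref{prop:local_generic_transversality_potentials_quadratic_past_given_radius}.
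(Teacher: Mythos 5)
Your overall strategy is the paper's: apply the Sard--Smale theorem to the map $\Phi(u_0,u_N,V)=\bigl(S_V((0,N),(u_0,0_{\rr^{\dState}})),(u_N,\wcslocInftyPlus{V}(u_N,1/N))\bigr)$ with target the diagonal $\www$ of $\nnn=\rr^{2\dState}\times\rr^{2\dState}$, dispose of the homogeneous profile via \cref{prop:transversality_of_homogeneous_solutions}, and establish the surjectivity hypothesis by a variation-of-constants/adjoint argument localized on the interval $\IOnce$ of \cref{lem:dot_u_does_not_vanish_and_values_reached_once}, with the three-case trichotomy resolved as you describe (cases 1 and 2 by the explicit construction of a perturbation supported near $u_2(r^*)$, $r^*\in\IOnce$, exactly as in the companion paper, and case 3 by requiring $W(u_{2,0})\neq 0$ rather than by deriving a contradiction). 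Up to minor cosmetic differences (your $\mmm$ restricts $u_0$ to $B_{\rr^{\dState}}(0,R)$ and uses a closed ball in the second factor), this is the proof in the paper.

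There is, however, one genuine gap: you fix a single integer $N$ with $1/N\le c_1$ and assert that the non-constant elements of $\ssss_{\VuInftyOfV}$ correspond bijectively to the preimage of $\www$ under $m\mapsto\Phi(m,V)$. This is not true for a fixed $N$. The second component of $\Phi$ parametrizes only the \emph{local} centre-stable manifold over $\widebar{B}_{\rr^{\dState}}(u_{1,\infty},\varepsilon_1)\times\{1/N\}$, so the preimage captures only those $u\in\ssss_{\VuInftyOfV}$ with $u(N)\in B_{\rr^{\dState}}(u_{1,\infty},\varepsilon_1)$ (and whose trajectory remains in the local chart for $r\ge N$). Since the elements of $\ssss_{\VuInftyOfV}$ approach $u_\infty(V)$ at no uniform rate --- and this set is not yet known to be finite, that being precisely what the theorem is meant to establish --- no single $N$ sees all of them, and the generic set produced by Sard--Smale for your fixed $N$ only guarantees transversality of the solutions already inside the chart at radius $N$. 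The missing step is to run the argument for every integer $N\ge r_1$, obtaining generic subsets $\LambdaGenOf{N}$, and to set $\nu_{\VOneuOneInftyGen}=\bigcap_{N}\LambdaGenOf{N}$, still generic as a countable intersection; then for any $V$ in this set and any $u\in\ssss_{\VuInftyOfV}$, the convergence $u(r)\to u_\infty(V)$ supplies some $N$ for which the fixed-$N$ statement applies, and $u$ is transverse. With that countable-intersection step added, your argument matches the paper's proof of \cref{prop:local_generic_transversality_potentials_quadratic_past_given_radius}.
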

\begin{proof}[Proof that \cref{prop:local_generic_transversality_potentials_quadratic_past_given_radius} yields \cref{prop:generic_transversality_potentials_quadratic_past_given_radius}]
Let us denote by $\vvvQuadMorse{R}$ the\break
dense open subset of $\vvvQuad{R}$ defined by the Morse property:
\begin{equation}
\label{def_vvv_Quad_R_Morse}
\vvvQuadMorse{R} = \left\{ V\in\vvvQuad{R} : \text{all critical points of $V$ are nondegenerate} \right\}
\,.
\end{equation}
Let $V_1$ denote a potential function in $\vvvQuadMorse{R}$. According to the Morse property its minimum points are isolated and since $V_1$ is in $\vvvQuad{R}$ they belong to the open ball $B_{\rr^d}(0,R)$, so that those minimum points are in finite number. Assume that \cref{prop:local_generic_transversality_potentials_quadratic_past_given_radius} holds. With the notation of this proposition, let us consider the following two intersections, at each time over all minimum points $u_{1,\infty}$ of $V_1$:
\begin{equation}
\nu_{V_1} = \bigcap\nu_{\VOneuOneInfty} 
\quad\text{and}\quad
\nu_{\VOneGen} = \bigcap\nu_{\VOneuOneInftyGen}
\,.
\end{equation}
Since those are finite intersections, $\nu_{V_1}$ is still a neighbourhood of $V_1$ in $\vvvQuad{R}$ and the set $\nu_{\VOneGen}$ is still a generic subset of $\nu_{V_1}$. This shows that the set 
\[
\{V\in\vvvQuadMorse{R} : \text{ every $u$ in $\ssss_{\VuInftyOfV}$ is transverse} \}
\] 
is locally generic. Applying \mainPaperLemLocalGenericityImpliesGlobalGenericity{} of \cite{JolyRisler_genericTransversalityTravStandFrontsPulses_2023} as in \mainPaperSectionReductionToLocalStatement{} of this reference shows that this local genericity implies the global genericity stated in \cref{prop:generic_transversality_potentials_quadratic_past_given_radius}, which is therefore proved. 
\end{proof}
\subsection{Proof of the local statement (\texorpdfstring{\cref{prop:local_generic_transversality_potentials_quadratic_past_given_radius}}{Proposition \ref{prop:local_generic_transversality_potentials_quadratic_past_given_radius}})}
\label{subsec:proof_local_gen}
\subsubsection{Setting}
\label{subsubsec:setting_proof_local_statement}
For the remaining part of this \namecref{sec:generic_transversality_potentials_quadratic_past_given_radius}, let us fix a potential function $V_1$ in $\vvvQuad{R}$ and a nondegenerate minimum point $u_{1,\infty}$ of $V_1$. Let $\nu$ be a neighbourhood of $V_1$ in $\vvvQuad{R}$, included in $\nuRobust(V_1,u_{1,\infty})$, and let $\varepsilon_1$ and $c_1$ be positive quantities, with $\nu$ and $\varepsilon_1$ and $c_1$ small enough so that the conclusions of \cref{prop:loc_centre_stab_manifold_at_infinity} hold. Let 
\[
\begin{aligned}
r_1 &= 1/c_1
&\ &\text{and}\ &
\mmm &= \rr^{\dState}\times B_{\rr^{\dState}}(u_{1,\infty},\varepsilon_1) 
&\ &\text{and}\ &
\Lambda &= \nu\,, \\
\text{and}\quad
\nnn &= (\rr^{2\dState})^2
&\ &\text{and}\ &
\www &= \{(A,B)\in\nnn : A = B\}
& & &
\,, 
\end{aligned}
\]
thus $\www$ is the diagonal of $\nnn$. Let $N$ denote an integer not smaller than $r_1$, and let us consider the functions
\[
\begin{aligned}
\Phiu : \rr^{\dState}\times\Lambda&\to\rr^{2\dState}
\,, &
(u_0,V)&\mapsto S_V\bigl((0,N),(u_0,0_{\rr^{\dState}})\bigr) \,, \\
\text{and}\quad 
\Phics : B_{\rr^{\dState}}(u_{1,\infty},\varepsilon_1)\times\Lambda&\to\rr^{2\dState} 
\,, &
(u_N,V)&\mapsto \bigl(u_N,\wcslocInftyPlus{V}(u_N,1/N)\bigr)
\,, 
\end{aligned}
\]
and the function
\begin{equation}
\label{def_Phi}
\Phi : \mmm\times\Lambda\to\nnn
\,, \quad
(m,V) = (u_0,u_N,V)\mapsto \bigl(\Phiu(u_0,V),\Phics(u_N,V)\bigr)
\,.
\end{equation}
\subsubsection{Equivalent characterizations of transversality}
Let us consider the set
\[
\ssss_{\Lambda,u_{1,\infty},N} = \bigl\{ (V,u): V\in\Lambda \text{ and } u \in\ssss_{\VuInftyOfV} \text{ and } u(N)\in B_{\rr^{\dState}}(u_{1,\infty},\varepsilon_1) \bigr\}
\,.
\]
\begin{proposition}
\label{prop:one_to_one_correspondence_with_diagonal_intersection}
The map 
\begin{equation}
\label{correspondence_between_Phi_minus_1_of_W_and_ssss_Lambda_N}
\Phi^{-1}(\www)\to\ssss_{\Lambda,u_{1,\infty},N}
\,,\quad
(u_0,u,V)\mapsto \Bigl(V,r\mapsto S_V\bigl((0,r),(u_0,0_{\rr^{\dState}}\bigr)\Bigr)
\end{equation}
is well defined and one-to-one.
\end{proposition}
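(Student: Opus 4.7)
The plan is to unpack what the condition $(u_0, u_N, V) \in \Phi^{-1}(\www)$ means in terms of the flow $S_V$ and the local centre-stable manifold, and then to verify the two assertions — well-definedness and injectivity — in turn. (I write $u_N$ for the second coordinate of the point of $\mmm$, to avoid notational clash with the profile function it determines.)

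For well-definedness, consider $(u_0,u_N,V) \in \Phi^{-1}(\www)$. By the definitions of $\Phiu$, $\Phics$, and $\Phi$, the condition $\Phi(u_0,u_N,V) \in \www$ is precisely the single vector equation
\[
S_V\bigl((0,N),(u_0,0_{\rr^{\dState}})\bigr) = \bigl(u_N,\wcslocInftyPlus{V}(u_N,1/N)\bigr)\,.
\]
Write $\tilde u(r)$ for the $u$-component of $S_V\bigl((0,r),(u_0,0_{\rr^{\dState}})\bigr)$; by \cref{lem:global_flow_and_a_priori_bound_on_profiles} this orbit is defined on $[0,+\infty)$, and (see \cref{flow_unstable_manifold_origin}) the lift $r\mapsto(\tilde u(r),\dot{\tilde u}(r),r)$ parametrizes, for $r>0$, the branch $\WuOriginPlus_V(u_0)$ of the unstable manifold of $(u_0,0_{\rr^{\dState}},0)$ for the autonomous system \cref{differential_system_autonomous_time_tau}. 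The displayed equation above then asserts exactly that
\[
\bigl(\tilde u(N),\dot{\tilde u}(N),1/N\bigr) \in \WcslocInftyPlus{V}{\varepsilon_1}{c_1}\bigl(u_\infty(V)\bigr)\,,
\]
the inequalities $N\ge r_1 = 1/c_1$ and $u_N\in B_{\rr^{\dState}}(u_{1,\infty},\varepsilon_1)$ placing us in the domain of the map \cref{notation_wcsloc_of_V}. Invoking \cref{prop:loc_centre_stab_manifold_at_infinity} with initial datum taken at time $r=N$, I then conclude that the orbit for $r\ge N$ stays in the local neighbourhood and that $\tilde u(r)\to u_\infty(V)$ as $r\to+\infty$. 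Hence $\tilde u\in\ssss_{V,u_\infty(V)}$ with $\tilde u(N)=u_N\in B_{\rr^{\dState}}(u_{1,\infty},\varepsilon_1)$, so that $(V,\tilde u) \in \ssss_{\Lambda,u_{1,\infty},N}$ as required.

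For injectivity, suppose $(u_0,u_N,V)$ and $(u_0',u_N',V')$ have the same image. Then $V=V'$ immediately, and the two associated profiles coincide as functions $[0,+\infty)\to\rr^{\dState}$; evaluating the common profile at $r=0$ forces $u_0 = \tilde u(0) = u_0'$, and at $r=N$ forces $u_N = \tilde u(N) = u_N'$.

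I do not foresee any serious obstacle: the argument is essentially an unwinding of the definitions of $\Phi$ and $\WcslocInftyPlus{V}{\varepsilon_1}{c_1}$, combined with the characterization of the latter manifold as an invariant graph (\cref{prop:loc_centre_stab_manifold_at_infinity}). The only mildly delicate point is that this characterization must be applied at time $r=N$ rather than at $r=0$ (where $c=+\infty$ and the proposition does not even make sense), and this is unlocked precisely by the built-in conditions $N\ge 1/c_1$ and $u_N\in B_{\rr^{\dState}}(u_{1,\infty},\varepsilon_1)$.
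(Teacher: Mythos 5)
Your treatment of well-definedness and of injectivity is correct and follows the same route as the paper's own proof: you read the condition $\Phi(u_0,u_N,V)\in\www$ as the single equality $S_V\bigl((0,N),(u_0,0_{\rr^{\dState}})\bigr)=\bigl(u_N,\wcslocInftyPlus{V}(u_N,1/N)\bigr)$, use the forward implication of \cref{prop:loc_centre_stab_manifold_at_infinity} (legitimately applied at time $r=N$, since $N\ge r_1=1/c_1$ and $u_N\in B_{\rr^{\dState}}(u_{1,\infty},\varepsilon_1)$) to get convergence to $u_\infty(V)$, and pin down the antecedent by evaluating the profile at $r=0$ and at $r=N$ (the latter using $u_N=\tilde u(N)$, which you rightly extracted from membership in $\Phi^{-1}(\www)$).

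The one thing you have not addressed is surjectivity, and this is the half of the paper's proof that is actually consumed downstream. The proposition is intended as a one-to-one \emph{correspondence} between $\Phi^{-1}(\www)$ and $\ssss_{\Lambda,u_{1,\infty},N}$: in \cref{prop:equivalence_transversality} and in the concluding step of \cref{subsec:proof_local_gen}, one deduces that \emph{every} $u$ in $\ssss_{\VuInftyOfV}$ with $u(N)$ in $B_{\rr^{\dState}}(u_{1,\infty},\varepsilon_1)$ is transverse from the transversality of the image of $m\mapsto\Phi(m,V)$, and for that one needs every such $u$ to arise from a point of $\Phi^{-1}(\www)$. The missing direction is the reverse unwinding: given $(V,u)$ in $\ssss_{\Lambda,u_{1,\infty},N}$, set $u_0=\lim_{r\to0^+}u(r)$ and $u_N=u(N)$; since $u(r)$ goes to $u_\infty(V)$ as $r$ goes to $+\infty$, the point $\bigl(u(N),\dot u(N),1/N\bigr)$ belongs to $\WcsInftyPlus_V\bigl(u_\infty(V)\bigr)$, hence $\dot u(N)=\wcslocInftyPlus{V}\bigl(u(N),1/N\bigr)$, so that $\Phiu(u_0,V)=\Phics(u_N,V)$ and $(u_0,u_N,V)$ lies in $\Phi^{-1}(\www)$. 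Adding this paragraph makes your argument coincide with the paper's.
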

\begin{proof}
The image by $\Phi$ of a point $(u_0,u_N,V)$ of $\mmm\times\Lambda$ belongs to the diagonal $\www$ of $\nnn$ if and only if $\Phiu(u_0,V)$ equals $\Phics(u_N,V)$, and in this case the function $u:r\mapsto S_V\bigl((0,r),(u_0,0_{\rr^{\dState}}\bigr)$ belongs to $\ssss_{\VuInftyOfV}$ and $u(N)$ (which is equal to $u_N$) belongs to $B_{\rr^{\dState}}(u_{1,\infty},\varepsilon_1)$, so that $(V,u)$ belongs to $\ssss_{\Lambda,u_{1,\infty},N}$. The map \cref{correspondence_between_Phi_minus_1_of_W_and_ssss_Lambda_N} above is thus well defined.

Now, for every $(V,u)$ in $\ssss_{\Lambda,u_{1,\infty},N}$, if we denote by $u_0$ the limit $\lim_{r\to0^+}u(r)$ and by $u_N$ the vector $u(N)$, then $(u_0,u_N,V)$ is the only possible antecedent of $(V,u)$ by the map \cref{correspondence_between_Phi_minus_1_of_W_and_ssss_Lambda_N}. In addition, 
\[
S_V\bigl((0,N),(u_0,0_{\rr^{\dState}})\bigr) = \bigl(u_N,\dot u(N)\bigr)
\,,
\]
and since $u(r)$ goes to $u_\infty(V)$ as $r$ goes to $+\infty$, the vector $\bigl(u(N),\dot u(N),1/N\bigr)$ must belong to the centre-stable manifold $\WcsInftyPlus_V\bigl(u_\infty(V)\bigr)$ of $u_\infty(V)$, so that, according to the definition of $\wcslocInftyPlus{V}$, 
\[
\dot u(N) = \wcslocInftyPlus{V}\bigl(u(N),1/N\bigr)
\,,
\]
and this yields the equality between $\Phiu(u_0,V)$ and $\Phics(u_N,V)$. Thus $\Phi(V,u)$ belongs to $\www$ and $(u_0,u_N,V)$ belongs to $\Phi^{-1}(\www)$. \Cref{prop:one_to_one_correspondence_with_diagonal_intersection} is proved. 
\end{proof}
\begin{proposition}
\label{prop:equivalence_transversality}
For every potential function $V$ in $\Lambda$, the following two statements are equivalent.
\begin{enumerate}
\item The image of the function $\mmm\to\nnn$, $m\mapsto\Phi(m,V)$ is transverse to $\www$.
\label{item:lem_equivalence_transversality_Phi}
\item Every $u$ in $\ssss_{\VuInftyOfV}$ such that $u(N)$ is in $B_{\rr^{\dState}}(u_{1,\infty},\varepsilon_1)$ is transverse. 
\label{item:lem_equivalence_transversality_fronts}
\end{enumerate}
\end{proposition}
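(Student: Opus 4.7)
The plan is to read the two conditions of \cref{prop:equivalence_transversality} as one and the same linear-algebraic equality, applied to the pair of differentials $D_{u_0}\Phiu$ and $D_{u_N}\Phics$: once in the form appearing on the parametric side (with $\www$ the diagonal of $\nnn$), and once in the form appearing on the geometric side (transversality of two submanifolds of $\rr^{2\dState+1}$). Fix $V\in\Lambda$. Take $(u_0,u_N,V)\in\Phi^{-1}(\www)$ and let $u\in\ssss_{\VuInftyOfV}$ be the associated solution supplied by \cref{prop:one_to_one_correspondence_with_diagonal_intersection}, so that $u(0)=u_0$, $u(N)=u_N$, and $\dot u(N)=\wcslocInftyPlus{V}(u_N,1/N)$. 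Write $p=\bigl(u(N),\dot u(N),N\bigr)\in\rr^{2\dState+1}$ for the common point at $r=N$, along the trajectory of $u$, of the two $(\dState+1)$-dimensional manifolds $\WuOriginPlus_V$ and $\iota^{-1}\bigl(\WcsInftyPlus_V(u_\infty(V))\bigr)$.

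The key preparation is to identify the tangent spaces at $p$ of these two manifolds. Applying the unstable manifold theorem to the equilibria $(u_0,0_{\rr^{\dState}},0)$ of the (regular, autonomous) system \cref{differential_system_autonomous_time_tau} and transporting the resulting $\ccc^k$-parametrization by the flow produces a $\ccc^k$-parametrization $(u_0,r)\mapsto\bigl(S_V((0,r),(u_0,0_{\rr^{\dState}})),r\bigr)$ of $\WuOriginPlus_V$, whose differential at $(u_0,N)$ yields $T_p\WuOriginPlus_V$ as the direct sum of the flow vector $X_V(p):=\bigl(\dot u(N),\ddot u(N),1\bigr)$ (coming from the $r$-derivative) and of the $\dState$-dimensional subspace $\bigl\{(\xi,0)\in\rr^{2\dState}\times\rr:\xi\in\operatorname{Im} D_{u_0}\Phiu(u_0,V)\bigr\}$ (coming from the $u_0$-derivative). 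Symmetrically, \cref{prop:loc_centre_stab_manifold_at_infinity} provides the local $\ccc^k$-parametrization $(u_N,r)\mapsto\bigl(u_N,\wcslocInftyPlus{V}(u_N,1/r),r\bigr)$ of $\iota^{-1}\bigl(\WcsInftyPlus_V(u_\infty(V))\bigr)$ near $p$, whence its tangent space at $p$ is the direct sum of $X_V(p)$ and of $\bigl\{(\eta,0):\eta\in\operatorname{Im} D_{u_N}\Phics(u_N,V)\bigr\}$. Both tangent spaces have dimension $\dState+1$ and share $X_V(p)$, which is the only vector among these with nonzero $r$-component, so transversality of the two manifolds at $p$ in $\rr^{2\dState+1}$ reduces exactly to
\[
\operatorname{Im} D_{u_0}\Phiu(u_0,V) + \operatorname{Im} D_{u_N}\Phics(u_N,V) = \rr^{2\dState}
\,.
\]
Flow-invariance of both manifolds then propagates this condition to every point of the trajectory of $u$, matching \cref{def:transversality_rad_symm_solution_stable_at_infinity}.

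On the parametric side, $D_m\Phi(m,V)\cdot(\delta u_0,\delta u_N)=\bigl(D_{u_0}\Phiu\cdot\delta u_0,\,D_{u_N}\Phics\cdot\delta u_N\bigr)$ and $T_{\Phi(m,V)}\www=\{(a,a):a\in\rr^{2\dState}\}$, so transversality of $m\mapsto\Phi(m,V)$ to $\www$ at $\Phi(m,V)$ is the requirement that every $(y_1,y_2)\in\nnn$ decomposes as $\bigl(D_{u_0}\Phiu\cdot\delta u_0+a,\,D_{u_N}\Phics\cdot\delta u_N+a\bigr)$; eliminating $a$ gives back exactly the displayed equality. Since by \cref{prop:one_to_one_correspondence_with_diagonal_intersection} the points of $\Phi^{-1}(\www)$ with fixed $V$ correspond bijectively to the $u\in\ssss_{\VuInftyOfV}$ with $u(N)\in B_{\rr^{\dState}}(u_{1,\infty},\varepsilon_1)$, this matches the two statements of \cref{prop:equivalence_transversality}. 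I expect the main technical hurdle to be the rigorous identification of $T_p\WuOriginPlus_V$: the original non-autonomous system \cref{differential_system_radially_symmetric_stationary_order_1} is singular at $r=0$, and it is the desingularized autonomous system \cref{differential_system_autonomous_time_tau} together with its $\ccc^k$-unstable manifold theorem that must be invoked, both to justify the $\ccc^k$ dependence of $\Phiu(u_0,V)$ on $u_0$ and to read off the tangent directions at $p$ claimed above.
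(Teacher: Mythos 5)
Your proposal is correct and follows essentially the same route as the paper: it reduces transversality along the trajectory to the single slice $r=N$, where the tangent spaces of the two invariant manifolds are read off as the images of $D_{u_0}\Phiu$ and $D_{u_N}\Phics$, and it identifies transversality of the image of $\Phi(\cdot,V)$ to the diagonal $\www$ with surjectivity of the difference of these two differentials, exactly as in the paper's \cref{lem:equivalence_transversality_linear_computation}. The only cosmetic difference is that you keep the common flow direction explicit in $\rr^{2\dState+1}$ before factoring it out, whereas the paper passes directly to the $\dState$-dimensional slices in $\rr^{2\dState}\times\{N\}$.
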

\begin{remark}
According to \cref{prop:transversality_of_homogeneous_solutions}, for every $V$ in $\Lambda$, the constant function $r\mapsto u_\infty(V)$, which belongs to $\ssss_V$, is already (a priori) known to be transverse, therefore only nonconstant solutions matter in statement \cref{item:lem_equivalence_transversality_fronts} of this proposition. 
\end{remark}
\begin{proof}
Let us consider $(m_2,V_2)$ in $\mmm\times\Lambda$ such that $\Phi(m_2,V_2)$ is in $\www$, let $(u_{2,0},u_{2,N})$ denote the components of $m_2$, and let $r\mapsto u_2(r)$ and $r\mapsto U_2(r)$ denote the functions satisfying, for all $r$ in $[0,+\infty)$, 
\[
U_2(r) = \bigl(u_2(r),\dot u_2(r)\bigr) = S_V\bigl((0,r),(u_{2,0},0_{\rr^{\dState}}\bigr)
\,.
\]
Let us consider the map
\[
\Delta\Phi : \mmm\to\rr^{2\dState}\,, \quad
(u_0,u_N)\mapsto \Phiu(u_0,V_2) - \Phics(u_N,V_2)
\,,
\]
and let us write, only for this proof, $D\Phi$ and $D\Phiu$ and $D\Phics$ and $D(\Delta\Phi)$ for the differentials of $\Phi$ and $\Phiu$ and $\Phics$ and $\Delta\Phi$ at $(m_2,V_2)$ and with respect to all variables in $\mmm$ (but not with respect to $V$). According to \cref{def:transversality_rad_symm_solution_stable_at_infinity}, the transversality of $u_2$ is defined as the transversality of the intersection $\WuOriginPlus_{V_2}\cap \iota^{-1}\Bigl(\WcsInftyPlus_{V_2}\bigl(u_{\infty}(V_2)\bigr)\Bigr)$ along the trajectory of $U_2$. This transversality can be considered at a single point, no matter which, of the trajectory $U_2\bigl((0,+\infty)\bigr)$, in particular at the point $\Phiu(u_{2,0},V_2)$ which is equal to $\Phics\bigl(u_2(N),V2\bigr)$, and is equivalent to the transversality of the $\dState$-dimensional manifolds 
\[
\WuOriginPlus_{V_2}\cap\bigl(\rr^{2\dState}\times\{N\}\bigr)
\quad\text{and}\quad
\iota^{-1}\Bigl(\WcsInftyPlus_{V_2}\bigl(u_{\infty}(V_2)\bigr)\Bigr)\cap\bigl(\rr^{2\dState}\times\{N\}\bigr)
\]
in $\rr^{2\dState}\times\{N\}$. It is therefore equivalent to the surjectivity of the map $D(\Delta\Phi)$ (statement \cref{item:transversality_Delta_Phi} in \cref{lem:equivalence_transversality_linear_computation} below). On the other hand, the image of the function $\mmm\to\nnn$, $m\mapsto\Phi(m,V_2)$ is transverse at $\Phi(m,V_2)$ to the diagonal $\www$ of $\nnn$ if and only if the image of $D\Phi$ contains a complementary space of this diagonal (statement \cref{item:transversality_Phi} in \cref{lem:equivalence_transversality_linear_computation} below)). Thus \cref{prop:equivalence_transversality} is a consequence of the next lemma. 
\begin{lemma}
\label{lem:equivalence_transversality_linear_computation}
The following two statements are equivalent.
\begin{enumerate}[(A)]
\item The image of $D\Phi$ contains a complementary subspace of the diagonal $\www$ of $\nnn$. 
\label{item:transversality_Phi}
\item The map  $D(\Delta\Phi)$ is surjective. 
\label{item:transversality_Delta_Phi}
\end{enumerate}
\end{lemma}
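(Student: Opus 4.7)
The plan is to reduce this equivalence to a one-line linear-algebra fact using the ``difference'' map $\pi:\nnn\to\rr^{2\dState}$, $(A,B)\mapsto A-B$. This map is linear and surjective, and its kernel is exactly the diagonal $\www$, so everything will flow from the identity $\ker\pi=\www$ together with a factorization of $\Delta\Phi$ through $\pi$.

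First I would unwind the definitions. Since $V_2$ is fixed and $D\Phi$, $D\Phiu$, $D\Phics$, $D(\Delta\Phi)$ denote differentials at $m_2$ with respect to the $\mmm$-variables alone, the definition of $\Delta\Phi$ gives directly, for every $(\delta u_0,\delta u_N)$ in $T_{m_2}\mmm$,
\[
D(\Delta\Phi)\cdot(\delta u_0,\delta u_N)=D\Phiu\cdot\delta u_0-D\Phics\cdot\delta u_N=\pi\bigl(D\Phi\cdot(\delta u_0,\delta u_N)\bigr)
\,,
\]
that is, $D(\Delta\Phi)=\pi\circ D\Phi$.

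Next I would combine this factorization with the surjectivity of $\pi$ and the equality $\ker\pi=\www$: the map $D(\Delta\Phi)$ is surjective onto $\rr^{2\dState}$ if and only if $\pi\bigl(\mathrm{Im}(D\Phi)\bigr)=\rr^{2\dState}$, which, since $\pi$ is already surjective, is in turn equivalent to $\mathrm{Im}(D\Phi)+\ker\pi=\nnn$, that is, $\mathrm{Im}(D\Phi)+\www=\nnn$. The last condition is equivalent to the statement that $\mathrm{Im}(D\Phi)$ contains a complementary subspace of $\www$ in $\nnn$: indeed, picking any vector-space complement of $\mathrm{Im}(D\Phi)\cap\www$ inside $\mathrm{Im}(D\Phi)$ yields such a complementary subspace, and conversely a complementary subspace contained in $\mathrm{Im}(D\Phi)$ obviously forces the sum $\mathrm{Im}(D\Phi)+\www$ to fill $\nnn$. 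This chain of equivalences is exactly \cref{item:transversality_Phi}$\Leftrightarrow$\cref{item:transversality_Delta_Phi}.

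There is no real obstacle: the lemma is pure linear algebra and simply re-expresses transversality of the image of $m\mapsto\Phi(m,V_2)$ to the diagonal $\www$ as surjectivity of the associated ``difference'' map $\Delta\Phi$. The only point deserving a moment of care is the final equivalence between ``contains a complementary subspace of $\www$'' and ``sums with $\www$ to all of $\nnn$'', which is a standard observation about subspaces of a finite-dimensional vector space.
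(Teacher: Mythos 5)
Your proof is correct and takes essentially the same route as the paper's: both hinge on the identity $D(\Delta\Phi)\cdot(\delta u_0,\delta u_N)=D\Phiu\cdot\delta u_0-D\Phics\cdot\delta u_N$ and the fact that the difference map on $\nnn$ is surjective with kernel exactly $\www$; you merely package the paper's element-wise manipulation of $(\gamma,\gamma)+D\Phi\cdot\delta m=(\alpha,\beta)$ as the factorization $D(\Delta\Phi)=\pi\circ D\Phi$. Your explicit remark that ``$\mathrm{Im}(D\Phi)+\www=\nnn$'' is equivalent to ``$\mathrm{Im}(D\Phi)$ contains a complement of $\www$'' is a welcome clarification of a point the paper leaves implicit.
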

\begin{proof}
If statement \cref{item:transversality_Phi} holds, then, for every $(\alpha,\beta)$ in $\nnn$, there exist $\gamma$ in $\rr^{2\dState}$ and $\delta m$ in $T_{m_2}\mmm$ such that
\begin{equation}
\label{complementary_of_diagonal}
(\gamma,\gamma) + D\Phi\cdot\delta m = (\alpha,\beta)
\,,
\end{equation}
so that
\begin{equation}
\label{DDelta_Phi_surjective}
D(\Delta\Phi)\cdot\delta m = \alpha-\beta
\,,
\end{equation}
and statement \cref{item:transversality_Delta_Phi} holds. Conversely, if statement \cref{item:transversality_Delta_Phi} holds, then, for every $(\alpha,\beta)$ in $\nnn$, there exists $\delta m$ in $T_{m_2}\mmm$ such that \cref{DDelta_Phi_surjective} holds, and as a consequence, if $(\delta u_0,\delta u_N)$ denote the components of $\delta m$, then $\alpha - D\Phiu(\delta u_0)$ is equal to $\beta - D\Phics(\delta u_N)$, and if this vector is denoted by $\gamma$, then equality \cref{complementary_of_diagonal} holds, and this shows that statement \cref{item:transversality_Phi} holds. 
\end{proof}
As explained above, \cref{prop:equivalence_transversality} follows from \cref{lem:equivalence_transversality_linear_computation}, and is therefore proved. 
\end{proof}
\subsubsection{Checking hypothesis \texorpdfstring{\mainPaperItemDimensionsSardSmaleTheorem{} of \mainPaperSardSmaleTheorem{} of \cite{JolyRisler_genericTransversalityTravStandFrontsPulses_2023}}{1 of Theorem 4.2 of JolyRislerGenericTransversalityTravStandFrontsPulses2021}}
\label{subsubsec:checking_hypothesis_dimensions}
The function $\Phi$ is as regular as the flow $S_V$, thus of class $\ccc^k$. It follows from the definitions of $\mmm$ and $\nnn$ and $\www$ that
\[
\dim(\mmm) - \codim(\www) = (\dState + \dState) - 2\dState = 0
\,,
\]
so that hypothesis \mainPaperItemDimensionsSardSmaleTheorem{} of \mainPaperSardSmaleTheorem{} of \cite{JolyRisler_genericTransversalityTravStandFrontsPulses_2023} is fulfilled. 
\subsubsection{Checking hypothesis \texorpdfstring{\mainPaperItemTransversalitySardSmaleTheorem{} of \mainPaperSardSmaleTheorem{} of \cite{JolyRisler_genericTransversalityTravStandFrontsPulses_2023}}{2 of Theorem 4.2 of JolyRislerGenericTransversalityTravStandFrontsPulses2021}}
For every $V$ in $\vvvQuad{R}$, let us recall the notation $S_V$ introduced in \cref{flow,flow_unstable_manifold_origin} for the flow of the differential system \cref{differential_system_radially_symmetric_stationary_order_1}.
Take $(m_2,V_2)$ in the set $\Phi^{-1}(\www)$. Let $(u_{2,0},u_{2,N})$ denote the components of $m_2$, and, for every $r$ in $(0,+\infty)$, let us write
\[
U_2(r) = \bigl(u_2(r),v_2(r)\bigr) = S_{V_2}\bigl((0,r),(u_{2,0},0_{\rr^{\dState}})\bigr)
\,.
\]
Let us write
\[
D\Phi
\quad\text{and}\quad
D\Phiu
\quad\text{and}\quad
D\Phics
\]
for the \emph{full} differentials (with respect to arguments $m$ in $\mmm$ \emph{and} $V$ in $\Lambda$) of the three functions $\Phi$ and $\Phiu$ and $\Phics$ respectively at the points $\bigl(u_{2,0},u_{2,N},V_2\bigr)$, 
$\bigl(u_{2,0},V_2\bigr)$ and $\bigl(u_{2,N},V_2\bigr)$. Checking hypothesis \mainPaperItemTransversalitySardSmaleTheorem{} of \mainPaperSardSmaleTheorem{} of \cite{JolyRisler_genericTransversalityTravStandFrontsPulses_2023} amounts to prove that 
\begin{equation}
\label{transversality_of_DPhi}
\img(D\Phi) + \www = \nnn
\,.
\end{equation}
If $u_2(\cdot)$ is constant (that is, identically equal to $u_\infty(V_2)$), then equality \cref{transversality_of_DPhi} follows from \cref{prop:transversality_of_homogeneous_solutions}. Thus, let us assume that $u_2(\cdot)$ is nonconstant. In this case, equality \cref{transversality_of_DPhi} is a consequence of the following lemma.
\begin{lemma}
\label{lem:existence_suitable_perturbation_direction_W}
For every nonzero vector $(\phi_2,\psi_2)$ in $\rr^{2\dState}$, there exists a function $W$ in $\CkbFull$ such that
\begin{align}
\label{supp_W_included_in_BR}
&\supp(W)\subset B_{\rr^d}(0,R)\,,\\
\text{and}\quad
\label{transversality_Phi_condition_dPhiu}
&\bigl\langle D\Phiu\cdot(0,0,W)\bigm|(\phi_2,\psi_2)\bigr\rangle\not=0
\,,\\
\text{and}\quad
\label{transversality_Phi_condition_dPhis}
&D\Phics\cdot(0,0,W) = 0_{\rr^{2\dState}}
\,.
\end{align}
\end{lemma}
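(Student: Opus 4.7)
Plan: I would exploit duality between the linearization of the flow along $u_2$ and its adjoint. Since the perturbation direction $(0,0,W)$ leaves the initial condition $(u_{2,0},0_{\rr^{\dState}})$ unchanged, variation of parameters applied to \cref{differential_system_radially_symmetric_stationary_order_1} gives $D\Phiu\cdot(0,0,W)=\delta U(N)$, where $\delta U$ vanishes at $r=0$ and is forced by $\bigl(0,\nabla W(u_2(r))\bigr)$. Introducing the adjoint solution $(\phi(\cdot),\psi(\cdot))$ along $u_2$ with terminal data $(\phi_2,\psi_2)$ at $r=N$, which satisfies
\[
\dot\phi(r) = -D^2V_2\bigl(u_2(r)\bigr)\psi(r) \quad\text{and}\quad \dot\psi(r) = -\phi(r) + \frac{\dSpace-1}{r}\psi(r) \,,
\]
I obtain the key pairing identity
\[
\bigl\langle D\Phiu\cdot(0,0,W) \bigm| (\phi_2,\psi_2)\bigr\rangle = \int_0^N \psi(r)\cdot \nabla W\bigl(u_2(r)\bigr)\,dr \,.
\]

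Condition \cref{transversality_Phi_condition_dPhis} then becomes automatic as soon as $\supp(W)$ is disjoint from a fixed neighbourhood of $u_{1,\infty}$: in that case $V_2+\varepsilon W$ agrees with $V_2$ on the domain of $\wcslocInftyPlus{V_2}$, so the local centre-stable manifold is unchanged and $D\Phics\cdot(0,0,W)=0$. It thus suffices to exhibit $W\in\CkbFull$ supported in $B_{\rr^{\dState}}(0,R)\setminus\widebar{B}_{\rr^{\dState}}(u_{1,\infty},2\varepsilon_1)$ for which the pairing above is nonzero (condition \cref{supp_W_included_in_BR} is then built in).

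Cauchy uniqueness for the adjoint linear ODE implies that $(\phi,\psi)$ never vanishes; and if $\psi$ vanished identically on an open subinterval then $\phi$ would as well via the second equation, contradicting $(\phi_2,\psi_2)\neq 0$. Together with \cref{lem:dot_u_does_not_vanish_and_values_reached_once}, this yields some $r^\star\in\IOnce\setminus\{0\}$ with $\psi(r^\star)\neq 0$, $\dot u_2(r^\star)\neq 0$, and $u_2$ passing through $u_2(r^\star)$ only at time $r=r^\star$. I would then take $W$ to be a smooth bump concentrated in a small ball around $u_2(r^\star)$, tuned so that $\nabla W(u_2(r^\star))$ is proportional to $\psi(r^\star)$; the values-reached-once property localizes the integrand to a short $r$-interval around $r^\star$ on which $\psi(r)\cdot\nabla W(u_2(r))$ keeps a definite sign, making the pairing nonzero.

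To enforce also $u_2(r^\star)\notin\widebar{B}_{\rr^{\dState}}(u_{1,\infty},2\varepsilon_1)$, I would split according to the position of $u_{2,0}$. In Case~1, $u_{2,0}$ lies outside this ball, and taking $r^\star$ small works by continuity; in Case~2, $u_{2,0}$ lies inside but the trajectory exits the ball within $\IOnce$, so $r^\star$ is chosen after the exit. The main obstacle is Case~3, where the trajectory remains in $\widebar{B}_{\rr^{\dState}}(u_{1,\infty},2\varepsilon_1)$ throughout $\IOnce$: as highlighted in \cref{table:comparison}, this case is \emph{not} precluded by contradiction as in \cite{JolyRisler_genericTransversalityTravStandFrontsPulses_2023} and requires a different perturbation---one with $W(u_{2,0})\neq 0$---whose effect on $\wcslocInftyPlus{V_2}$ must be arranged to cancel; verifying this is the delicate step of the proof.
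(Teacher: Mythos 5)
Your setup --- variation of constants, the adjoint solution $(\phi,\psi)$ of \cref{adjoint_linearized_system}, the pairing identity reducing \cref{transversality_Phi_condition_dPhiu} to $\int_0^N\nabla W\bigl(u_2(r)\bigr)\cdot\psi(r)\,dr\neq 0$, and the observation that a support disjoint from a neighbourhood of $u_{1,\infty}$ yields \cref{supp_W_included_in_BR,transversality_Phi_condition_dPhis} for free --- matches the paper. The gap is in the construction of $W$. A bump $W$ concentrated in a small ball around $u_2(r^\star)$ cannot be ``tuned so that $\nabla W\bigl(u_2(r^\star)\bigr)$ is proportional to $\psi(r^\star)$'' while keeping the integrand of one sign: along the component of $\psi$ parallel to $\dot u_2$ the integrand is a total derivative, $\nabla W\bigl(u_2(r)\bigr)\cdot\dot u_2(r)=\frac{d}{dr}W\bigl(u_2(r)\bigr)$, and since the trajectory enters and leaves the support of $W$ that contribution telescopes to $W(\text{exit})-W(\text{entry})=0$. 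Consequently, if $\psi(r)=\alpha\,\dot u_2(r)$ on $\IOnce$ with $\alpha$ a nonzero \emph{constant} --- a situation the paper shows cannot be a priori excluded, for instance when $\nabla V_2$ is constant on $u_2(\IOnce)$ --- your integral vanishes for every admissible bump whose support the trajectory traverses, and knowing merely that $\psi(r^\star)\neq 0$ is of no help. The relevant trichotomy is therefore not the one you propose (position of $u_{2,0}$ relative to a ball around $u_{1,\infty}$, which the paper disposes of simply by shrinking $\rOnce$ so that $u_2(\IOnce)\cap\widebar{B}_{\rr^{\dState}}(u_{1,\infty},\varepsilon_1)=\emptyset$), but the collinearity structure of $\psi$ versus $\dot u_2$ on $\IOnce$: either $\psi(r^\star)$ is not collinear to $\dot u_2(r^\star)$ for some $r^\star$ (a bump aimed at the \emph{orthogonal} component of $\psi(r^\star)$ then works), or $\psi=\alpha(\cdot)\dot u_2$ with $\alpha$ nonconstant (the variation of $\alpha$ is exploited), or $\psi=\alpha\dot u_2$ with $\alpha$ constant and nonzero.

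In that last case, the fix is not, as you suggest, a perturbation whose effect on $\wcslocInftyPlus{V_2}$ ``must be arranged to cancel'' --- nothing needs cancelling, since the support still avoids $\widebar{B}_{\rr^{\dState}}(u_{1,\infty},\varepsilon_1)$. The point is to place the support of $W$ around the \emph{endpoint} $u_{2,0}=u_2(0)$ of the trajectory, so that the telescoping no longer closes up: the left-hand side of \cref{reach_all_directions_condition_with_psi} becomes $W\bigl(u_2(\rOnce)\bigr)-W(u_{2,0})=-W(u_{2,0})$, which is nonzero as soon as $W(u_{2,0})\neq 0$. Your proposal correctly senses that a residual ``case 3'' is the delicate point, but it locates the difficulty in the wrong dichotomy and supplies no mechanism that would make the integral nonzero there.
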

\begin{proof}[Proof that \cref{lem:existence_suitable_perturbation_direction_W} yields equality \cref{transversality_of_DPhi}]
Inequality \cref{transversality_Phi_condition_dPhiu} shows that the orthogonal complement, in $\rr^{2\dState}$, of the directions that can be reached by $D\Phiu\cdot(0,0,W)$ for potentials $W$ satisfying \cref{supp_W_included_in_BR,transversality_Phi_condition_dPhis} is reduced to $0_{\rr^{2\dState}}$; in other words, all directions of $\rr^{2\dState}$ can be reached by that means. This shows that
\[
\img(D\Phi) \supset \rr^{2\dState}\times\{0_{\rr^{2\dState}}\}
\,,
\]
and since the subspace at the right-hand side of this inclusion is transverse to $\www$ in $\rr^{4\dState}$, this proves equality \cref{transversality_of_DPhi} (and shows that hypothesis \mainPaperItemTransversalitySardSmaleTheorem{} of \mainPaperSardSmaleTheorem{} of \cite{JolyRisler_genericTransversalityTravStandFrontsPulses_2023} is fulfilled).
\end{proof}
\begin{proof}[Proof of \cref{lem:existence_suitable_perturbation_direction_W}]
Let $(\phi_2,\psi_2)$ denote a nonzero vector in $\rr^{2\dState}$, let $W$ be a function in $\CkbFull$ satisfying the inclusion 
\begin{equation}
\label{condition_supp_W_proof_lemma_existence_W}
\supp(W)\subset B_{\rr^d}(0,R) \setminus \widebar{B}_{\rr^{\dState}}(u_{1,\infty},\varepsilon_1)
\,,
\end{equation}
and observe that inclusion \cref{supp_W_included_in_BR} and equality \cref{transversality_Phi_condition_dPhis} follow from this inclusion \cref{condition_supp_W_proof_lemma_existence_W}. Let us consider the linearization of the differential system \cref{differential_system_radially_symmetric_stationary_order_2}, for the potential $V_2$, around the solution $r\mapsto U_2(r)$:
\begin{equation}
\label{linearized_differential_system}
\frac{d}{dr}
\begin{pmatrix}
\delta u(r) \\ \delta v(r) 
\end{pmatrix}
= \begin{pmatrix}
0 & \id \\ D^2 V_2 \bigl(u_2(r)\bigr) & -\frac{\dSpace-1}{r}
\end{pmatrix}
\begin{pmatrix}
\delta u(r) \\ \delta v(r) 
\end{pmatrix}
\,,
\end{equation}
and let $T(r,r')$ denote the family of evolution operators obtained by integrating this linearized differential system between $r$ and $r'$. It follows from the variation of constants formula that 
\begin{equation}
\label{expression_DPhi_trav_front}
D\Phiu\cdot(0,0,W) = \int_{-\infty}^{N} T(r,N) \Bigl(0,\nabla W\bigl(u_2(r)\bigr)\Bigr)\, dr
\,.
\end{equation}
For every $r$ in $(0,+\infty)$, let $T^*(r,N)$ denote the adjoint operator of $T(r,N)$, and let 
\begin{equation}
\label{def_phi_psi}
\bigl(\phi(r),\psi(r)\bigr) = T^*(r,N)\cdot(\phi_2,\psi_2)
\,.
\end{equation}
According to expression \cref{expression_DPhi_trav_front}, inequality \cref{transversality_Phi_condition_dPhiu} reads
\[
\int_{-\infty}^{N}\Bigl\langle\Bigl(0,\nabla W\bigl(u_2(r)\bigr)\Bigr) \,\Bigm|\, T^*(r,N)\cdot(\phi_2,\psi_2) \Bigr\rangle\, dr \not= 0
\,,
\]
or equivalently
\begin{equation}
\label{reach_all_directions_condition_with_psi}
\int_{-\infty}^{N}\nabla W\bigl(u_2(r)\bigr)\cdot \psi(r) \, dr \not= 0
\,.
\end{equation}
Due to the expression of the linearized differential system \cref{linearized_differential_system}, $(\phi,\psi)$ is a solution of the adjoint linearized system
\begin{equation}
\label{adjoint_linearized_system}
\begin{pmatrix}
\dot\phi(r) \\ \dot\psi(r) 
\end{pmatrix}
= - \begin{pmatrix}
0 & D^2 V_2 \bigl(u_2(r)\bigr) \\
\id & -\frac{\dSpace-1}{r}
\end{pmatrix}  
\begin{pmatrix}
\phi(r) \\ \psi(r) 
\end{pmatrix}
\,.
\end{equation}
According to \cref{lem:dot_u_does_not_vanish_and_values_reached_once} (and since $u_2(\cdot)$ was assumed to be \emph{nonconstant}), there exists positive quantity $\rOnce$ such that, if we denote by $\IOnce$ the interval $(0,\rOnce]$, then $\dot u_2(\cdot)$ \emph{does not vanish} on $\IOnce$, and, for all $r^*$ in $\IOnce$ and $r$ in $\rr$, 
\begin{equation}
\label{values_reached_only_once_trav}
u_2(r) = u_2 (r^*) \implies r = r^*
\,.
\end{equation}
In addition, up to replacing $\rOnce$ by a smaller positive quantity, it may be assumed that the following conclusions hold: 
\[
u_2(\IOnce)\cap \widebar{B}_{\rr^{\dState}}(u_{1,\infty},\varepsilon_1) = \emptyset
\,.
\]
To complete the proof three cases have to be considered. 
\paragraph*{Case 1.} There exists $r^*$ in $\IOnce$ such that $\psi(r^*)$ is \emph{not} collinear to $\dot u_2(r^*)$. 

In this case, the construction of a potential function $W$ satisfying inclusion \cref{condition_supp_W_proof_lemma_existence_W} and inequality \cref{transversality_Phi_condition_dPhiu} (and thus the conclusions of \cref{lem:existence_suitable_perturbation_direction_W}) is the same as in the proof of \mainPaperLemPerturbationPotReachingAGivenDirection{} of \cite{JolyRisler_genericTransversalityTravStandFrontsPulses_2023}. 

If case 1 does \emph{not} occur, then $\psi(r)$ is collinear to $\dot u_2(r)$, and since $\dot u_2(\cdot)$ does not vanish on $\IOnce$, there exists a $\ccc^1$-function $\alpha:\IOnce \to\rr$ such that, for every $r$ in $\IOnce$,
\begin{equation}
\label{psi_equals_alpha_times_derivative_of_uzero}
\psi(r) = \alpha(r) \dot u_2(r)
\,.
\end{equation} 
The next cases 2 and 3 differ according to whether the function $\alpha(\cdot)$ is constant or not. 

\paragraph*{Case 2.} For every $r$ in $\IOnce$, equality \cref{psi_equals_alpha_times_derivative_of_uzero} holds for some \emph{nonconstant} function $\alpha(\cdot)$.

In this case there exists $r^*$ in $\IOnce$ such that $\dot\alpha(r^*)$ is nonzero, and again the construction of a potential function $W$ satisfying inclusion \cref{condition_supp_W_proof_lemma_existence_W} and inequality \cref{transversality_Phi_condition_dPhiu} (and thus the conclusions of \cref{lem:existence_suitable_perturbation_direction_W}) is the same as in the proof of \mainPaperLemPerturbationPotReachingAGivenDirection{} of \cite{JolyRisler_genericTransversalityTravStandFrontsPulses_2023}. 

\paragraph*{Case 3.} For every $r$ in $\IOnce$, $\psi(r)=\alpha \dot u_2(r)$ for some real (constant) quantity $\alpha$. 

In this case the quantity $\alpha$ cannot be $0$ or else, due to \cref{adjoint_linearized_system,psi_equals_alpha_times_derivative_of_uzero}, both $\phi(\cdot)$ and $\psi(\cdot)$ would identically vanish on $\IOnce$ and thus on $(0,+\infty)$, a contradiction with the assumptions of \cref{lem:existence_suitable_perturbation_direction_W}. Thus, without loss of generality, we may assume that $\alpha$ is equal to $1$. If $\supp(W)$ is included in a sufficiently small neighbourhood of $u_{2,0}$, then $W(\cdot)$ vanishes on $u_2\bigl([\rOnce,N]\bigr)$ and the integral on the left-hand side of inequality \cref{reach_all_directions_condition_with_psi} reads
\[
\int_0^{\rOnce}\nabla W\bigl(u_2(r)\bigr)\cdot \dot u_2(r) \, dr = W\bigl(u_2(\rOnce)\bigr) - W(u_{2,0}) = - W(u_{2,0})
\,,
\]
so that inequality \cref{reach_all_directions_condition_with_psi} holds as soon as $W(u_{2,0})$ is nonzero. \Cref{lem:existence_suitable_perturbation_direction_W} is proved. 
\end{proof}
\begin{remark}
By contrast with the proof of the generic elementarity of standing pulses in \cite{JolyRisler_genericTransversalityTravStandFrontsPulses_2023}, case 3 above cannot be easily precluded. Indeed, let us assume that, for every $r$ in $\IOnce$, $\psi(r)$ is equal to $\alpha\dot u_2(r)$ for some nonzero (constant) quantity $\alpha$. Without loss of generality, we may assume that $\alpha$ is equal to $1$. Then, it follows from the second equation of \cref{adjoint_linearized_system} that, still for every $r$ in $\IOnce$ (omitting the dependency on $r$),
\[
\phi = \frac{\dSpace-1}{r}\psi - \dot \psi = \frac{\dSpace-1}{r}\dot u_2 - \ddot u_2 = \frac{2(\dSpace-1)}{r}\dot u_2 - \nabla V_2(u_2)
\,,
\]
and it follows from the first equation of \cref{adjoint_linearized_system} that
\[
-D^2V_2(u_2)\dot u_2 = \dot \phi = - \frac{2(\dSpace-1)}{r^2}\dot u_2 + \frac{2(\dSpace-1)}{r}\ddot u_2 - D^2 V_2(u_2)\dot u_2
\,,
\]
and thus, after simplification,
\[
\ddot u_2 = \frac{1}{r}\dot u_2
\,,\quad\text{or equivalently}\quad
\dot u_2 = \frac{r}{\dSpace}\nabla V(u_2)
\,.
\]
As illustrated by equality \cref{estimate_dot_u_close_to_origin}, this last equality indeed holds if $\nabla V_2$ is constant on the set $u_2(\IOnce)$. Case 3 can therefore not be a priori precluded, and if it may be argued that this case is ``unlikely'' (non generic), the direct argument provided above in this case is simpler. By contrast, in \cite{JolyRisler_genericTransversalityTravStandFrontsPulses_2023} for standing pulses in space dimension one ($\dSpace$ equal to $1$), this case could not occur because $\psi$ was assumed to be nonzero on the symmetry subspace, defined here as $\{(v,r)=(0_{\rr^{\dState}},0)\}$, see \cref{notation_sssSym}. 
\end{remark}
\subsubsection{Conclusion}
As seen in \cref{subsubsec:checking_hypothesis_dimensions}, hypothesis 1 of \mainPaperSardSmaleTheorem{} of \cite{JolyRisler_genericTransversalityTravStandFrontsPulses_2023} is fulfilled for the function $\Phi$ defined in \cref{def_Phi}, and since \cref{lem:existence_suitable_perturbation_direction_W} yields equality \cref{transversality_of_DPhi}, hypothesis 2 of this theorem is also fulfilled. The conclusion of this theorem ensures that there exists a generic subset $\LambdaGenOf{N}$ of $\Lambda$ such that, for every $V$ in $\LambdaGenOf{N}$, the image of the function $\mmm\to\nnn$, $m\mapsto \Phi(m,V)$ is transverse to the diagonal $\www$ of $\nnn$. According to \cref{prop:equivalence_transversality}, it follows that every $u$ in $\ssss_{\VuInftyOfV}$ such that $u(N)$ is in $B_{\rr^{\dState}}(u_{1,\infty},\varepsilon_1)$ is transverse. The set
\[
\LambdaGen = \bigcap_{N\in\nn,\,N\ge r_0}\LambdaGenOf{N}
\]
is still a generic subset of $\Lambda$. For every $V$ in $\LambdaGen$ and every $u$ in $\ssss_{\VuInftyOfV}$, since $u(r)$ goes to $u_\infty(V)$ as $r$ goes to $+\infty$, there exists $N$ such that $u(N)$ is in $B_{\rr^{\dState}}(u_{1,\infty},\varepsilon_1)$, and according to the previous statements $u$ is transverse. In other words, the conclusions of \cref{prop:local_generic_transversality_potentials_quadratic_past_given_radius} hold with: 
\[
\nu_{\VOneuOneInfty} = \nu = \Lambda
\quad\text{and}\quad
\nu_{\VOneuOneInftyGen} = \LambdaGen
\,.
\]
\section{Proof of the main results}
\label{sec:proof_main_results}
\Cref{prop:generic_transversality_potentials_quadratic_past_given_radius} shows the genericity of the property considered in \cref{thm:main}, but only inside the space $\vvvQuad{R}$ of the potentials that are quadratic past some radius $R$. In this section, the arguments will be adapted to obtain the genericity of the same property in the space $\vvvFull$ (that is $\CkFull$) of all potentials, endowed with the extended topology (see \cref{subsec:space_of_potentials}). They are identical to those of \mainPaperSectionProofMainResults{} of \cite{JolyRisler_genericTransversalityTravStandFrontsPulses_2023}. Let us recall the notation $\ssss_V$ introduced in \cref{notation_sss_V}, and, for every positive quantity $R$, let us consider the set
\[
\ssss_{V,R} = \left\{u\in\ssss_V:\sup_{r\in[0,+\infty)}\abs{u(r)}\le R\right\}
\,.
\]
Exactly as shown in \mainPaperSubsectionProofConclusionTravFronts{} of \cite{JolyRisler_genericTransversalityTravStandFrontsPulses_2023}, \cref{thm:main} follows from the next proposition. 
\begin{proposition}
\label{prop:genericity_transv_tf_up_to_R}
For every positive quantity $R$, there exists a generic subset $\vvvFullTransvSsss{R}$ of $\vvvFull$ such that, for every potential $V$ in this subset, every radially symmetric stationary solution stable at infinity in $\ssss_{V,R}$ is transverse. 
\end{proposition}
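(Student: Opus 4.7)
The strategy is to reduce leaf by leaf to the already-established \cref{prop:generic_transversality_potentials_quadratic_past_given_radius}, exploiting the bound $\abs{u(r)}\le R$ valid (by definition) for every $u\in\ssss_{V,R}$. Since the differential system \cref{differential_system_radially_symmetric_stationary_order_2}, together with its linearization along such a trajectory, only involves the restriction $V|_{\widebar{B}_{\rr^{\dState}}(0,R)}$, both the set $\ssss_{V,R}$ and the transversality of each of its elements depend only on this restriction; perturbations of $V$ supported outside $\widebar{B}_{\rr^{\dState}}(0,R)$ are therefore completely harmless. Recall also that, by the very definition of the extended topology of \cref{subsec:space_of_potentials}, the affine leaves $V_0+\CkbFull$ equipped with $\normCkb$ are open-and-closed in $\vvvFull$, so that a subset of $\vvvFull$ is generic if and only if it is generic in each such leaf; it therefore suffices to argue on one leaf at a time.

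Fix $V_0\in\vvvFull$ and $R'>R$, and pick a smooth cut-off $\chi:\rr^{\dState}\to[0,1]$ equal to $1$ on $\widebar{B}_{\rr^{\dState}}(0,R)$ and to $0$ outside $B_{\rr^{\dState}}(0,R')$. The continuous affine map
\[
\Psi : V_0+\CkbFull\to\vvvQuad{R'}\,,\quad V\mapsto \chi\,V + (1-\chi)\,\frac{\abs{\,\cdot\,}^2}{2}
\]
preserves the restriction to $\widebar{B}_{\rr^{\dState}}(0,R)$, so $\ssss_{V,R}=\ssss_{\Psi(V),R}$ with matching transversality on both sides. Rather than pulling back the generic subset of \cref{prop:generic_transversality_potentials_quadratic_past_given_radius} as a black box, I would re-run the proof of the local counterpart \cref{prop:local_generic_transversality_potentials_quadratic_past_given_radius} with the parameter space $\Lambda$ now taken as a small neighbourhood of a given potential inside the affine slice $\Psi(V_0+\CkbFull)$ rather than inside the whole of $\vvvQuad{R'}$. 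Hypothesis \mainPaperItemDimensionsSardSmaleTheorem{} of \mainPaperSardSmaleTheorem{} of \cite{JolyRisler_genericTransversalityTravStandFrontsPulses_2023} is then unaffected, while for hypothesis \mainPaperItemTransversalitySardSmaleTheorem{} the crucial observation is that the perturbation $W$ built in \cref{lem:existence_suitable_perturbation_direction_W} already satisfies $\supp(W)\subset B_{\rr^{\dState}}(0,R)$, thanks to inclusion \cref{condition_supp_W_proof_lemma_existence_W}; it therefore represents automatically an admissible perturbation direction inside the slice, since on $B_{\rr^{\dState}}(0,R)$ the cut-off $\chi$ is identically $1$.

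The main obstacle is then essentially one of bookkeeping, identical to the one resolved in \mainPaperSectionProofMainResults{} of \cite{JolyRisler_genericTransversalityTravStandFrontsPulses_2023}: the identification between the slice $\Psi(V_0+\CkbFull)$ and the leaf $V_0+\CkbFull$ must be kept compatible with the two Banach-space topologies all along the Sard--Smale argument, and the countable exhaustion in the integer $N$ of the proof in \cref{subsec:proof_local_gen} must be reproduced inside the leaf. Once this is done, the local-genericity-implies-global-genericity mechanism of \mainPaperLemLocalGenericityImpliesGlobalGenericity{} of \cite{JolyRisler_genericTransversalityTravStandFrontsPulses_2023}, applied inside the leaf $V_0+\CkbFull$, yields a generic subset of this leaf on which every element of $\ssss_{V,R}$ is transverse; taking the union of these leaf-wise subsets over $V_0\in\vvvFull$ produces the desired generic subset $\vvvFullTransvSsss{R}$ of $\vvvFull$.
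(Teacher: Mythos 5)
Your overall strategy --- everything relevant to $\ssss_{V,R}$ depends only on $V|_{\widebar{B}_{\rr^{\dState}}(0,R)}$, so reduce to the quadratic case and note that the perturbations of \cref{lem:existence_suitable_perturbation_direction_W} are supported in the ball --- is indeed the strategy of the paper. But the step you dismiss as ``bookkeeping'' is precisely where the content of this section lies, and your mechanism for it fails. The map $\Psi:V\mapsto\chi V+(1-\chi)\abs{\cdot}^2/2$ is neither injective nor open, and its image $\Psi(V_0)+\chi\,\CkbFull$ is not a closed subspace of $\bigl(\CkbFull,\normCkb\bigr)$: smallness of $\norm{\chi(W_1-W_0)}_{\CkbShort}$ does not control $W_1-W_0$ near the set where $\chi$ and its derivatives degenerate (take a fixed-height bump supported at distance $\rho\to0$ from $\partial\{\chi>0\}$). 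Consequently the Sard--Smale theorem cannot be run with a neighbourhood of the slice as parameter space (it is not a Banach manifold), and, even if it could, the preimage under the non-open map $\Psi$ of a dense subset of the slice need not be dense in the leaf, so genericity does not transfer back. The paper avoids cut-off multiplication entirely and works with the restriction maps to $\widebar{B}_{\rr^{\dState}}(0,R)$, which are open continuous surjections because they admit bounded extension operators as right inverses.

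There is a second, independent gap: even through a well-behaved map, the image of a merely generic set need not be generic, since $f\bigl(\bigcap_n O_n\bigr)\subseteq\bigcap_n f(O_n)$ is an inclusion in the wrong direction. The one genuinely new ingredient of the paper's proof, which your proposal never produces, is the replacement of the non-compact departure set $\mmm$ by the compact set $\mmm_N=B_{\rr^{\dState}}(0_{\rr^{\dState}},N)\times B_{\rr^{\dState}}(u_{1,\infty},\varepsilon_1)$; compactness upgrades the transversal sets $\ooo_{V_1,u_{1,\infty},N}$ from generic to \emph{open and dense} in the separable space $\vvvQuad{(R+1)}$, and only such sets can be pushed forward and pulled back through the restriction maps, the countable intersection over $(i,N)$ being taken at the very end in $\vvvFull$. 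This is also how the paper copes with the fact that a leaf $V_0+\CkbFull$ is \emph{not} separable (so no countable subcover of your neighbourhoods can be extracted inside a leaf): the countable cover is extracted in $\vvvQuad{(R+1)}$ and transported by preimage. Finally, a smaller inaccuracy: \cref{lem:existence_suitable_perturbation_direction_W} only places $\supp(W)$ in the ball of the \emph{quadratic} radius, and hypothesis 2 of the Sard--Smale theorem must be verified at every point of $\Phi^{-1}(\www)$, including solutions of $\ssss_{\VuInftyOfV}$ that are not bounded by $R$; your claim that $W$ automatically sits where $\chi\equiv1$ therefore does not cover all the required cases.
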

\begin{proof}
Let $R$ denote a positive quantity, let $V_1$ denote a potential function in $\vvvQuad{(R+1)}$, and let $u_{1,\infty}$ denote a nondegenerate minimum point of $V_1$. Let us consider the neighbourhood $\nu_{\VOneuOneInfty}$ of $V_1$ in $\vvvQuad{(R+1)}$ provided by \cref{prop:local_generic_transversality_potentials_quadratic_past_given_radius} for these objects, together with the quantities $\varepsilon_1$, $c_1$, and $r_1$ introduced in \cref{subsubsec:setting_proof_local_statement}. Up to replacing $\nu_{\VOneuOneInfty}$ by its interior, we may assume that it is open in $\vvvQuad{(R+1)}$. As in \cref{subsubsec:setting_proof_local_statement}, let us consider an integer $N$ not smaller than $r_1$, and the same function $\Phi:\mmm\times\Lambda\to\nnn$ as in \cref{def_Phi}. 

Here is the sole difference with the setting of \cref{subsubsec:setting_proof_local_statement}: by contrast with the non-compact set $\mmm$ defining the departure set of $\Phi$, let us consider the compact subset $\mmm_N$ defined as:
\[
\mmm_N = B_{\rr^{\dState}}(0_{\rr^{\dState}},N) \times B_{\rr^{\dState}}(u_{1,\infty},\varepsilon_1)
\,.
\]
Thus the integer $N$ now serves two purposes: the ``time'' (radius) at which the intersection between unstable and centre-stable manifolds is considered, and the radius of the ball containing the departure points of the unstable manifolds that are considered. These purposes are independent (two different integers instead of the single integer $N$ may as well be introduced). Let us consider the set:
\[
\ooo_{V_1,u_{1,\infty},N} = \left\{V\in\nu_{\VOneuOneInfty}:\Phi(\mmm_N,V)\text{ is transverse to $\www$ in $\nnn$}
\right\}
\,.
\]
As shown in \cref{prop:equivalence_transversality}, this set $\ooo_{V_1,u_{1,\infty},N}$ is made of the potential functions $V$ in $\nu_{\VOneuOneInfty}$ such that every $u$ in $\ssss_{\VuInftyOfV}$ such that $u(N)$ is in $B_{\rr^{\dState}}(u_{1,\infty},\varepsilon_1)$ \emph{and} $u(0)$ is in $B_{\rr^{\dState}}(0_{\rr^{\dState}},N)$, is transverse. This set contains the generic subset $\nu_{\VOneuOneInftyGen} = \LambdaGen$ of $\nu_{\VOneuOneInfty}$ and is therefore generic (thus, in particular, dense) in $\nu_{\VOneuOneInfty}$. By comparison with $\nu_{\VOneuOneInftyGen}$, the additional feature of this set $\ooo_{V_1,u_{1,\infty},N}$ is that it is \emph{open}: exactly as in the proof of \mainPaperLemDensityOpennessOooExtension{} of \cite{JolyRisler_genericTransversalityTravStandFrontsPulses_2023}, this openness follows from the intrinsic openness of a transversality property and the compactness of $\mmm_N$. 

Let us make the additional assumption that the potential $V_1$ is a Morse function. Then, the set of minimum points of $V_1$ is finite and depends smoothly on $V$ in a neighbourhood $\nuRobust(V_1)$ of $V_1$. Intersecting the sets $\nu_{\VOneuOneInfty}$ and $\ooo_{V_1,u_{1,\infty},N}$ above over all the minimum points $u_{1,\infty}$ of $V_1$ provides an open neighbourhood $\nu_{V_1}$ of $V_1$ and an open dense subset $\ooo_{V_1,N}$ of $\nu_{V_1}$ such that, for all $V$ in $\nu_{V_1}$, every radially symmetric stationary solution stable close to a minimum point of $V$ at infinity, and equal at origin to some point of $B_{\rr^{\dState}}(0_{\rr^{\dState}},N)$, is transverse. 

Denoting by $\interior(A)$ the interior of a set $A$ and using the notation of \mainPaperSubsectionTopologicalPropertiesRestrictionMaps{} of \cite{JolyRisler_genericTransversalityTravStandFrontsPulses_2023}, let us introduce the sets
\[
\begin{aligned}
\tilde{\nu}_{V_1} &= \res_{R,\infty}^{-1}\circ\res_{R,(R+1)}(\nu_{V_1}) \,, \\
\text{and}\quad
\tilde{\ooo}_{V_1,N} &= \res_{R,\infty}^{-1}\circ\res_{R,(R+1)}(\ooo_{V_1,N}) \,, \\
\text{and}\quad
\tilde{\ooo}^{\ext}_{V_1,N} &= \tilde{\ooo}_{V_1,N} \sqcup \interior\bigl(\vvvFull\setminus\tilde{\nu}_{V_1}\bigr)
\,.
\end{aligned}
\]
It follows from these definitions that $\tilde{\ooo}^{\ext}_{V_1,N}$ is a dense open subset of $\vvvFull$ (for more details, see \mainPaperLemDensityOpennessTildeOooExtension{} of \cite{JolyRisler_genericTransversalityTravStandFrontsPulses_2023}). 

Since $\vvvQuad{(R+1)}$ is a separable space, it is second-countable, and can be covered by a countable number of sets of the form ${\nu}_{V_1}$. With symbols, there exists a countable family $(V_{1,i})_{i\in\nn}$ of potentials of $\vvvQuadMorse{(R+1)}$ so that 
\[
\vvvQuadMorse{(R+1)} = \bigcup_{i\in\nn}{\nu}_{V_{1,i}}
\,.
\]
Let us consider the set 
\[
\vvvFullTransvSsss{R} = \vvvFullMorse\cap\left(\bigcap_{(i,N)\in\nn^2}\tilde{\ooo}^{\ext}_{V_{1,i},N}\right)
\,,
\]
where $\vvvFullMorse$ is the set of potentials in $\vvvFull$ which are Morse functions. This set is a countable intersection of dense open subsets of $\vvvFull$, and is therefore a generic subset of $\vvvFull$. And, for every potential $V$ in this set $\vvvFullTransvSsss{R}$, every radially symmetric stationary solution stable at infinity in $\ssss_{V,R}$ is transverse (for more details, see \mainPaperLemCheckTransversalityExtension{} of \cite{JolyRisler_genericTransversalityTravStandFrontsPulses_2023}). \Cref{prop:genericity_transv_tf_up_to_R} is proved. 
\end{proof}
As already mentioned at the beginning of this \namecref{sec:proof_main_results}, \cref{thm:main} follows from \cref{prop:genericity_transv_tf_up_to_R}. Finally, \cref{cor:insight_main_result} follows from \cref{thm:main} (for more details, see \mainPaperSubsectionProofConclusionsOneToFourCorMain{} of \cite{JolyRisler_genericTransversalityTravStandFrontsPulses_2023}).
\paragraph*{Acknowledgements}
This paper owes a lot to numerous fruitful discussions with Romain Joly, about both its content and the content of the companion paper \cite{JolyRisler_genericTransversalityTravStandFrontsPulses_2023} written in collaboration with him. 
\printbibliography
\bigskip
\mySignature
%
%
\end{document}